\documentclass[journal]{IEEEtran}  

\usepackage{cancel}
\usepackage{cite}
\usepackage{graphics}
\usepackage{multirow}
\usepackage[table]{xcolor}
\usepackage{epsfig}
\usepackage{mathptmx} 
\usepackage{times} 
\usepackage{amsmath}
\usepackage{amssymb,,amsfonts} 
\usepackage{amsthm} 
\usepackage{mathptmx}
\usepackage{dblfloatfix} 

\newtheorem{definition}{Definition}
\newtheorem{theorem}{\bf{Theorem}}
\newtheorem{remark}{\bf{Remark}}
\usepackage{comment}
\usepackage{mathtools}
\usepackage{tabularx} 
\usepackage{multicol}
\usepackage{xcolor}

\usepackage[section]{placeins}
\usepackage{cuted}

\usepackage{tabularray}
\usepackage{float}
\usepackage[shortlabels]{enumitem}
\usepackage{placeins}
\DeclareMathAlphabet{\mathcal}{OMS}{cmsy}{m}{n}

\theoremstyle{remark}

\theoremstyle{definition}
\usepackage[utf8]{inputenc}
\usepackage[english]{babel}

\usepackage{multirow}
\usepackage{tabularx}
\usepackage{adjustbox} 


\newcommand{\PreserveBackslash}[1]{\let\temp=\\#1\let\\=\temp}
\newcolumntype{C}[1]{>{\PreserveBackslash\centering}p{#1}}
\newcolumntype{R}[1]{>{\PreserveBackslash\raggedleft}p{#1}}
\newcolumntype{L}[1]{>{\PreserveBackslash\raggedright}p{#1}}

\hyphenation{op-tical net-works semi-conduc-tor}
\newcommand{\sina}[1]{\textcolor{blue}{#1}}
\newcommand{\nrc}[1]{\textcolor{blue}{#1}}

\newcommand{\lk}[1]{\textcolor{blue}{#1}}

\def\cl#1{\textcolor{teal}{#1}}

\begin{document}

\title{Robust Adaptive Supplementary Control for Damping Weak-Grid SSOs Involving IBRs}

\author{Sina~Ameli,
        Lilan~Karunaratne,~\IEEEmembership{Student Member,~IEEE},  Nilanjan Ray Chaudhuri,~\IEEEmembership{Senior Member,~IEEE}, and  Constantino~Lagoa,~\IEEEmembership{Member,~IEEE \vspace{-10pt}}

\thanks{S. Ameli, L. Karunaratne, N. R. Chaudhuri and C. M. Lagoa are with the School of Electrical Engineering and Computer
Science, Penn State University, University Park, PA, USA e-mail: sba6099@psu.edu, lvk5363@psu.edu, nuc88@psu.edu, and cml18@psu.edu.}
\thanks{Financial support from NSF Grant Award ECCS 2317272 is gratefully acknowledged.}
}

\maketitle

\begin{abstract}
Subsynchronous oscillations (SSOs) involving grid-following converters (GFLCs) connected to weak grids are a relatively new phenomena observed in modern power systems. SSOs are further exacerbated when grids become weaker because lines are disconnected due to maintenance or following faults. Such undesirable oscillations have also led to curtailment of inverter-based resource (IBR) outputs. In contrast to most literature addressing the issue by retuning/redesigning of standard IBR controllers, we propose a robust adaptive supplementary control for damping of such SSOs while keeping standard controls unaltered. As a result, uncertainty in system conditions can be handled without negatively impacting the nominal IBR performance. To that end, the adaptive control law is derived for a GFLC connected to the grid, where the grid is modeled by the Thevenin's equivalent representation with uncertainty and disturbances. The theoretical result provides dissipativity certificate for the closed-loop error dynamics with sufficient conditions for stability. The effectiveness of the developed controller is validated with several case studies conducted on a single-GFLC-infinite-bus test system, the IEEE $2$-area test system, wherein some of the synchronous generators are replaced by GFLCs, \nrc{and a modified IEEE $5$-area test system with two GFLCs}. The findings demonstrate that under very weak grid conditions, the proposed robust adaptive control performs well in stabilizing SSO modes, which a classical state-feedback control method fails to address. 
\end{abstract}

\begin{IEEEkeywords}
grid-following converter, robust adaptive control, dissipativity, SSO, subsynchronous oscillation
\end{IEEEkeywords}

\IEEEpeerreviewmaketitle

\vspace{-10pt}
\section{Introduction}\label{sec:Intro}
\IEEEPARstart{R}{apid} growth of inverter-based resources (IBRs) has pushed the contemporary power system into an uncharted territory. IBRs are based on two types of converters namely, grid-following converters (GFLCs)~\cite{yazdani2010voltage} and grid-forming converters (GFCs)\cite{GFC_IEE_Access}. GFLCs are widely adopted in power systems to integrate renewable sources to the grid. 
In recent times, several incidents of  subsynchronous oscillations (SSOs) involving GFLCs have been reported in different parts of the world \cite{IBRSSOTF}, which can be classified into three types: (a) series capacitor SSO, (b) weak-grid SSO, and (c) inter-IBR SSO \cite{Lingling-InterIBR}. The focus of this paper is stabilization/damping of weak-grid SSOs.

\subsection{Literature Review on Weak-Grid SSO Stabilization}
Weak-grid SSO phenomena have been analyzed in multiple papers including    \cite{IBRSSOTF, Lingling-19-Type4WindModel,Lingling-21-ReducedAnalyticalPV,Lingling-23-NewIBRoscType,SSO_ibr}. \nrc{It has been recognized that typically the following factors influence weak-grid SSOs \cite{IBRSSOTF, Lingling-19-Type4WindModel,Lingling-21-ReducedAnalyticalPV,Lingling-23-NewIBRoscType,SSO_ibr}:} (a) grid strength quantified by short circuit ratio (SCR) or its variants and operating conditions influenced by factors including IBR power outputs, and (b) \nrc{IBR controllers including Phase-Lock-Loop (PLL),} inner current controller, outer voltage controller, and communication delay involved in plant-level voltage controls. Based upon this understanding, certain approaches have been proposed/implemented to stabilize SSOs. We can divide the weak-grid SSO stabilization approaches into two broad categories: (i) grid strength-focused, and (ii) IBR-focused.

\textit{(i) Grid strength-focused measures \cite{Ignacio-18-ChinaSSO,Jian-20-Type3Model,Chun-18-PVosc}:} In northeast China, a $13$-Hz SSO was avoided by adding a $500$-kV line to improve grid strength (in addition to wind farm control retuning). A $20$-Hz SSO in Canada was mitigated by closing a tie-breaker to increase system strength.

\textit{(ii) IBR-focused measures:} The IBR-focused measures can be subdivided into: (ii.A) temporary IBR power curtailment to alleviate SSO; (ii.B) permanent controller retuning by changing controller parameters while keeping control structure unaltered; (ii.C) controller retuning and adding supplementary modulation control; (ii.D) adding supplementary modulation to existing controls; and (ii.E) controller resynthesis by altering controller structure. 

\underline{(ii.A) and (ii.B)}: In most instances of SSOs in the field, \textit{temporary power curtailment} was performed by the system operator \textit{followed by a \textcolor{black}{permanent} controller retuning} by the vendors \cite{Huang-12-ERCOT,E3C19,NERCguide17,Liu-17-SSCI}. \nrc{Such retuning efforts largely focused on PLL retuning including bandwidth reduction} and outer voltage controller parameter change \cite{Huang-12-ERCOT,E3C19,NERCguide17,Liu-17-SSCI,IBRSSOTF}. In addition, research has shown that the proportional gain of the PI controller used in the inner current control loops can be retuned to avoid adverse impact of the weak-grid SSO mode \cite{JunLiang_VSI_WeakGrid,Lennart_PLL_WeakGrid}. 

\underline{(ii.C)}: Reference \cite{mittal2024stability} proposes a two-step process for SSO stabilization in a system with two parallel IBRs. In the first step, the best combination of IBR control modes in the outer loops have been determined followed by appropriate controller tuning. In the second step, a supplementary control involving voltage at the \sina{point of interconnection} \sina{ (}POI\sina{)} is used as a feedback signal through a high-pass filter to modulate the reactive power reference of one of the IBRs. 

\begin{table*}[ht!]
\lk{
\caption{Pros and Cons of Control Solutions}
\label{Tab:Methods}
\centering
\begin{tabular}{|L{6cm}| *{7}{C{1.5cm}|}}
    \hline
    \centerline{\textbf{Attributes}} & \textbf{\hspace{-1pt}Robustness guarantee} & \textbf{Analytical stability guarantee} & \textbf{Preserves nominal performance} & \textbf{Minimally invasive} & \textbf{Adaptability} & \textbf{Suitability for practical application}\\ 
    \hline
    Standard controller retuning~ \cite{Huang-12-ERCOT,E3C19,NERCguide17,Liu-17-SSCI,IBRSSOTF,JunLiang_VSI_WeakGrid,Lennart_PLL_WeakGrid} & $\times$: no theoretical guarantees  & $\times$: no rigorous stability guarantees & $\times$: introduces conservatism & $\checkmark$: only changes controller parameters & $\times$: non-adaptive & $\checkmark$: easy to implement \\
    \hline
     PLL resynthesis \cite{Blaabjerg_DoublePLL,Iravani_FrequencySynch} & $\times$: no theoretical guarantees & $\times$: no rigorous stability guarantees & $\checkmark$: designed around nominal condition & $\times$: major alteration in PLL & $\times$: non-adaptive & $\checkmark$: reasonably easy to implement \\
    \hline 
    Switch between GFL and GFM \cite{liu2024adaptive} & $\checkmark$: rigorous empirical evidence & $\times$: no rigorous stability guarantees  & $\checkmark$: retains core controls & $\times$: requires additional GFM controls & $\checkmark$: adapts to off-nominal conditions & $\times$: highly complex \\
    \hline 
    Outer loop resynthesis \cite{Oriol_IBRWeakGrid} & $\checkmark$: locally robust controllers & $\times$: no rigorous stability guarantees & $\checkmark$: uses gain scheduling & $\times$: major changes involved & $\checkmark$: adapts to off-nominal conditions & $\times$: highly complex\\
    \hline 
    Controller retuning and supplementary control  \cite{mittal2024stability,azimi2022supplementary,li2024quasi} & $\times$: no theoretical guarantees & $\times$: no rigorous stability guarantees & $\checkmark$: retains core design & $\checkmark$: supplementary addition & $\times$: non-adaptive & $\checkmark$: reasonably easy to implement \\
    \hline 
    Proposed robust adaptive controller & $\checkmark$: theoretically guaranteed & $\checkmark$: analytical stability certificate & $\checkmark$: retains core design & $\checkmark$: only supplementary addition & $\checkmark$: adapts to off-nominal conditions & $\checkmark$: easy to implement \\
    \hline
\end{tabular}}
\end{table*}

\underline{(ii.D)}: \sina{Reference~\cite{azimi2022supplementary} designs a passivity-based supplementary controller for IBRs in weak grids to damp oscillations in output power and frequency. The proposed supplementary controller modulates voltage signal generated by the PI-current controller. 
Reference~\cite{li2024quasi} develops a quasi-harmonic voltage supplementary control to damp weak-grid SSOs. The proposed supplementary control takes into account the \sina{pulse-width modulation} \sina{(}PWM\sina{)} delay and is injected to the modulation signal to suppress SSO in the POI voltage.}

\underline{(ii.E)}: Examples of resynthesis philosophy include double-PLL \cite{Blaabjerg_DoublePLL} and frequency-based synchronization control augmenting the existing PLL \cite{Iravani_FrequencySynch}. Reference \cite{Oriol_IBRWeakGrid} proposes altering outer-loop structure by introducing real power ($P$)-voltage ($V$) decoupling controls that are designed using $35$ local robust controllers. A gain scheduling approach is used to smoothly change controller dynamics with operating point changes. A simpler controller with the $P$-$V$ decoupling philosophy was proposed in \cite{LingLing_WindWeakGrid}. \nrc{In~\cite{liu2024adaptive}, a nonlinear controller is implemented using a hysteresis loop-like control to smoothly switch between grid-following mode to the grid-forming mode when the system transitions from a strong grid to a weak grid.}

\subsection{Gaps in Literature}
Table~\ref{Tab:Methods} summarizes the literature on SSO damping using IBR-focused measures and highlights the gaps/limitations in light of \nrc{six} attributes, \nrc{four of} which are elaborated below. \nrc{The remaining two are self-explanatory from the table.} Note that grid strength-focused measures may not be adequate alone and are part of medium to long-term planning time horizon if new infrastructure needs to be developed. Therefore, going forward, this philosophy will not be discussed. 

\textit{1. Robustness to uncertainty and disturbance:} System operating condition influenced by factors including load flow and system topology, which are uncertain, has a significant impact on the SSO mode. In addition, there are unmeasureable disturbances within the system that can impact SSO damping performance. Controller retuning performed in \cite{Huang-12-ERCOT,E3C19,NERCguide17,Liu-17-SSCI,IBRSSOTF,JunLiang_VSI_WeakGrid,Lennart_PLL_WeakGrid}, retuning followed by supplementary controls proposed in \cite{mittal2024stability}, and control resynthesis in \cite{Blaabjerg_DoublePLL,Iravani_FrequencySynch,LingLing_WindWeakGrid} are based upon design around nominal operating points using linear control theory and provides \textit{only limited robustness and performance guarantees}. Adaptive control approaches proposed in \cite{Oriol_IBRWeakGrid} and \cite{liu2024adaptive} on the other hand considers robustness against change in operating conditions, but do suffer from certain drawbacks elaborated next. \nrc{The supplementary controller proposed in reference \cite{azimi2022supplementary} mitigates disturbances in the system that are functions of state variables, and derivatives of voltages and currents, all of which are assumed to be measurable. Moreover, the authors did not consider parametric uncertainty in their system. Reference \cite{li2024quasi} on the other hand, does not consider any disturbance and parametric uncertainty during control design. }

\textit{2. Scientific rigor in the form of analytical guarantees:}
Although numerical stability analysis was presented, virtually none of the papers in literature of SSO damping \cite{Huang-12-ERCOT,E3C19,NERCguide17,Liu-17-SSCI,IBRSSOTF,JunLiang_VSI_WeakGrid,Lennart_PLL_WeakGrid,mittal2024stability,Blaabjerg_DoublePLL,Iravani_FrequencySynch,Oriol_IBRWeakGrid,LingLing_WindWeakGrid,liu2024adaptive} provided analytical guarantees of stability and performance facing uncertainty in operating condition and disturbances. The gain-scheduled adaptive controller proposed in \cite{Oriol_IBRWeakGrid} changes the controller based on IBR power output with \sina{no theoretical stability/performance guarantees}. \nrc{The hysteresis-type adaptive control proposed in \cite{liu2024adaptive} uses a heuristic to create a map between SCR and the ratio constants needed for GFL-GFM mode change without solid justification. 
Moreover, no rigorous analytical guarantees were provided for stability and performance in either of \cite{liu2024adaptive} and  \cite{Oriol_IBRWeakGrid}.} \nrc{Reference \cite{li2024quasi} does not provide any rigorous analytical stability guarantees in presence of the proposed supplementary control.}

\textit{3. Impact on nominal performance:} Standard GFLC controls involving hierarchical controls are designed under certain nominal conditions in which the SCR might not be low. Except adaptive controllers proposed in \cite{Oriol_IBRWeakGrid,liu2024adaptive}, other approaches \cite{Huang-12-ERCOT,E3C19,NERCguide17,Liu-17-SSCI,IBRSSOTF,mittal2024stability,Blaabjerg_DoublePLL,Iravani_FrequencySynch} will lead to certain conservatism, i.e., they will need to sacrifice performance under nominal condition while being able to avoid SSO under weak grid conditions.

\textit{4. Change required in the standard GFLC controls in existing IBRs:} Controller retuning proposed in \cite{Huang-12-ERCOT,E3C19,NERCguide17,Liu-17-SSCI,IBRSSOTF} is advantageous in this regard. However, other measures will face significant challenges. \nrc{Controller resynthesis proposed in \cite{Blaabjerg_DoublePLL,Iravani_FrequencySynch,Oriol_IBRWeakGrid,liu2024adaptive} demand that at least parts of the existing standard GFLC control architecture need to change, which is difficult to do for existing IBRs in the field.} Similarly, it can be very challenging to determine the best possible combination of control modes in individual IBRs as proposed in \cite{mittal2024stability}, since there are many existing IBRs in the field as opposed to only two IBRs considered therein.  

Finally, to the best of our knowledge, except \cite{azimi2022supplementary}, all of the existing papers have validated the proposed controls when the rest of the grid is modeled using a voltage source behind constant impedance as opposed to a multimachine power system with synchronous generators (SGs).

\subsection{Contributions of This Work}
\nrc{Although oscillation damping control is not fundamentally new in power systems literature, the SSO phenomena is a new problem. The contributions of this paper in the particular context of SSOs are the following} 
--\\
\nrc{(a) Nonlinear robust adaptive control is proposed for the first time for damping IBR-induced SSOs.}\\
\nrc{(b) As opposed to papers in SSO damping literature, the proposed controller provides theoretically rigorous guarantees of} 
robustness to uncertainties in the operating conditions and system disturbances.\\
\nrc{(c) Unlike most SSO damping papers, analytical certificates of stability and damping performance are derived during the controller design process.}\\ 
\nrc{(d) As opposed to existing literature, we consider two multimachine systems with SGs (including the $16$-machine IEEE system) for validating the effectiveness of the proposed control, in addition to a single-GFLC-infinite-bus system. }

Models based ons space phasor calculus (SPC), quasistationary phasor calculus (QPC), and EMT frameworks are used for simulations. 

\cl{\textcolor{blue}{The approach presented has a few advantages over other possible nonlinear control strategies. First, the control law developed has a simple form and can handle uncertainty. Furthermore, stability is proven and bounds on performance are provided. Although one could use, for example, sliding mode control to address uncertainty, this would be done at the cost of having high gains and high frequency chattering effects. Other nonlinear control approaches, such as backstepping, require a specific ``triangular'' system structure and often result in complex control algorithms. In summary, in our view, the approach proposed allows one to design a relatively simple control law that can handle parametric uncertainty and mitigates disturbance effects.}}

\section{Notations and Preambles}
The following notations and conventions are employed throughout the paper:
$\mathbb{R}$, $\mathbb{R}^n$, $\mathbb{R}^{n\times m}$ denote the space of real numbers, real vectors of length $n$ and real matrices of $n$ rows and $m$ columns, respectively; $\mathbb{R}_{+}$ denotes the set of non-negative real numbers, and
$\mathbb{R}_{++}$ denotes the set of strictly positive real numbers; and 
$X^\top$ denotes the transpose of the quantity $X$.
Normal-face lower-case letters (e.g., $x\in\mathbb{R}$) are used to represent real scalars, bold-face lower-case letters (e.g., $\mathbf{x}\in\mathbb{R}^n$) represent vectors, normal-face upper case letters (e.g., $X\in\mathbb{R}^{n\times m}$) represent matrices, while $X\succ0 ~(\succeq0)$ denotes a positive definite (semi-definite) matrix. The standard Euclidean norm for the vector $\mathbf{x}$ is denoted by $\left\|\mathbf{x}\right\|$.
The space of all square-integrable functions is defined as:
\begin{align*}
\mathcal{L}_2^n\triangleq\left\{\textbf{f}:\mathbb{R}_{+}\rightarrow\mathbb{R}^n\left|\sqrt{ \int_0^\infty \left\|\mathbf{f}(\tau)\right\|^2d\tau}<\infty\right.\right\}.
\end{align*}
For each $T\in\mathbb{R}_{+}$, the function $\textbf{f}_T:\mathbb{R}_{+}\rightarrow\mathbb{R}^n$, given by
\begin{align*}
\textbf{f}_T(t)\triangleq\left\{\begin{array}{rl}\textbf{f}(t),&0\le t\le T\\\textbf{0},&t> T\end{array}\right.,
\end{align*}
is called the \emph{truncation} of $\textbf{f}$ on the interval $[0,\hspace{2mm}T]$. Consequently, the set $\mathcal{L}_{2e}^n$ of all measurable signal $\textbf{f}:\mathbb{R}_{+}\rightarrow\mathbb{R}^n$ such that $\textbf{f}_T(t)\in\mathcal{L}_2$ for all $T\in[0,\hspace{2mm}\infty)$ is called an extension of $\mathcal{L}_2^n$ or the extended $\mathcal{L}_2$-space. 
\begin{definition}(Finite-Gain $\mathcal{L}$-stability)\cite{khalil2002nonlinear}
\noindent Consider the nonlinear system
\begin{align} \label{eqn:General}
   \mathcal{H}:\hspace{5mm} \begin{array}{rl}
    \dot{\mathbf{x}}&=\mathbf{f}(\mathbf{x},\mathbf{w})\\
          \mathbf{z}&=\mathbf{h}(\mathbf{x})
    \end{array}
\end{align}
where $\mathbf{x}\in\mathcal{L}_{2e}^n$, $\mathbf{w}\in\mathcal{L}_{2e}^p$, $\mathbf{z}\in\mathcal{L}_{2e}^m$ are the state, input, and output vector signals, respectively. The system in \eqref{eqn:General}, considered as a mapping of the form $\mathcal{H}:\mathcal{L}^p_{2e}\rightarrow\mathcal{L}^m_{2e}$ is said to be finite-gain $\mathcal{L}_2$-stable if there exists real non-negative constants $\gamma,~\varphi$ such that $ \left\|\mathcal{H}(\mathbf{w})\right\|\leq \gamma\left\|\mathbf{w}\right\|+\varphi$. Moreover, $\gamma^*=\inf\left\{\gamma\hspace{2mm}:\hspace{1mm} \left\|\mathcal{H}(\mathbf{w})\right\|\leq \gamma\left\|\mathbf{w}\right\|+\varphi \right\}
$ is the corresponding $\mathcal{L}_2$-gain (will be referred to as gain) of the system.
\end{definition}

\begin{definition}($\gamma$-dissipativity)\cite{van2000l2}
The dynamic system \eqref{eqn:General} is said to be dissipative with respect to the supply rate $s(\mathbf{w},\mathbf{z})\in \mathbb{R}$, if there exists an energy function $V(\mathbf{x})\geq 0$ such that, for all $t_f\geq t_0$, 
\begin{align}\label{eqn:dissipativity ineq}
    V(\mathbf{x}(t_f))\leq V(\mathbf{x}(t_0))+\int_{t_0}^{t_f} s\left(\mathbf{w}(\tau),\mathbf{z}(\tau)\right) d\tau\hspace{2mm}\forall\hspace{2mm} \mathbf{w}\in\mathcal{L}^p_{2e}.
\end{align}
Moreover, given a positive scalar $\gamma$, if the supply rate is taken as $s(\mathbf{w},\mathbf{z})=\gamma^2\left\|\mathbf{w}\right\|^2-\left\|\mathbf{z}\right\|^2$,
then the dissipation inequality in \eqref{eqn:dissipativity ineq} implies a finite-gain $\mathcal{L}_2$ stability and the system is said to be $\gamma$-dissipative. Consequently, the dissipativity inequality in \eqref{eqn:dissipativity ineq} becomes
\begin{align*}
    \dot{V}\leq \gamma^2\left\|\mathbf{w}\right\|^2-\left\|\mathbf{z}\right\|^2.
\end{align*}
\end{definition}
\section{Modeling of Grids with SGs and GFLCs} \label{sec:modeling}
This section presents a brief description of mathematical representation used for modeling the power system under consideration. Dynamic models of SGs and GFLCs are developed in respective rotating $d$-$q$ frames, while the transmission network and loads are represented in a synchronously rotating $D$-$Q$ frame. The models adopt SPC framework, which is built on space phasors (also called space-vectors) denoted by $\bar{x}(t) \in \mathcal{C}$. 
The time-varying phasor $\bar{x}_{dq}(t)$ is calculated by the frequency shift operation on $\bar{x}(t)$ leading to $\bar{x}_{dq}(t) = x_d(t) + jx_q(t) = \bar{x}(t)e^{-j\rho(t)}$, where $\frac{\mathrm{d} \rho (t) }{\mathrm{d} t} = \omega (t)$ is the angular speed of the rotating $dq$ frame. The operator $\bar{\Upsilon} : \mathcal{M} \rightarrow \mathcal{D}$ is defined as $\bar{x}_{bb}(t) = \bar{\Upsilon}(\textbf{x}(t))$, where $\mathcal{M}$ is a vector space representing the set of balanced three-phase signals. It can be shown that 
$\bar{\Upsilon}(\textbf{x}(t)) = [1~j~0] \mathcal{P}(t)\textbf{x}(t)$, where $\mathcal{P}(t)$ 
is the Park's transformation matrix.
Readers are referred to \cite{lilan_2024modeling} for a more comprehensive treatment.

\subsection{Modeling of GFLC and its Controls}\label{sec:GFLC}
A generic circuit diagram of a GFLC is illustrated in Fig.~\ref{fig:circuit_gflc}. The dc-side of the converter represents a functional model of a renewable energy source where $c_c$ and $\tau_c$ are the dc-link capacitance and a delay in current source, respectively. Note that converter losses are added in the form of resistance $r_{on}$ to the filter resistance.  

\begin{figure}[h]
    \centerline{
\includegraphics[width=0.42\textwidth]{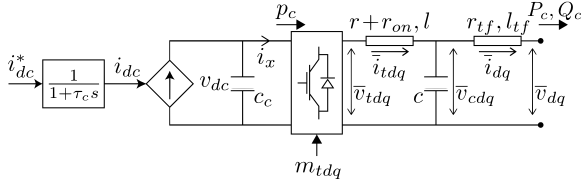}}
    \vspace{-5pt}
    \caption{Circuit model of GFLC [parameters: $\tau_{c}$ = $0.05$ s, $c_{c}$ = $1.7370$ pu, $r$ = $0.0033$ pu, $r_{on}$ = $0.0023$ pu, $l$ = $0.2454$ pu, $r_{t}$ = $0$ pu, $l_{t}$ = $0.1500$ pu, $s_{base}$ = $100$ MVA, $v_{dc,base}$ = $48.9873$ kV, $v_{ac,base}$ = $20$ kV].}
    \label{fig:circuit_gflc}
\end{figure}

The ac-side of the converter is modeled and controlled in a rotating $d$-$q$ frame generated by the phase-locked-loop (PLL) (see Fig.~\ref{fig:pll_control}), which is used to align the $d$-axis of the rotating frame along the voltage vector of the grid ($\overline{v}_{DQ}$), maintaining a zero $q$-axis voltage. The PLL model derives the reference angle $\delta_{pll}$ from the angular frequency deviation $\Delta\omega_{pll}$, which in turn is used for coordinate transformation between $DQ$ and $dq$ frames. The ac-side filter connects the converter's ac terminal to the grid through a transformer and typically consists of $r$-$l$-$c$ elements. However, we neglect the capacitance $c$ that is small and is used to filter out switching harmonics. Thus, the series impedance $(r+r_{on})$-$l$ is combined with series resistance and leakage inductance, $r_{tf}$ and $l_{tf}$ of the transformer when modeling and that leads to $\overline{i}_{tdq}$ = $\overline{i}_{dq}$.


\begin{figure}[h!]
    \centerline{
    \includegraphics[width=0.4\textwidth]{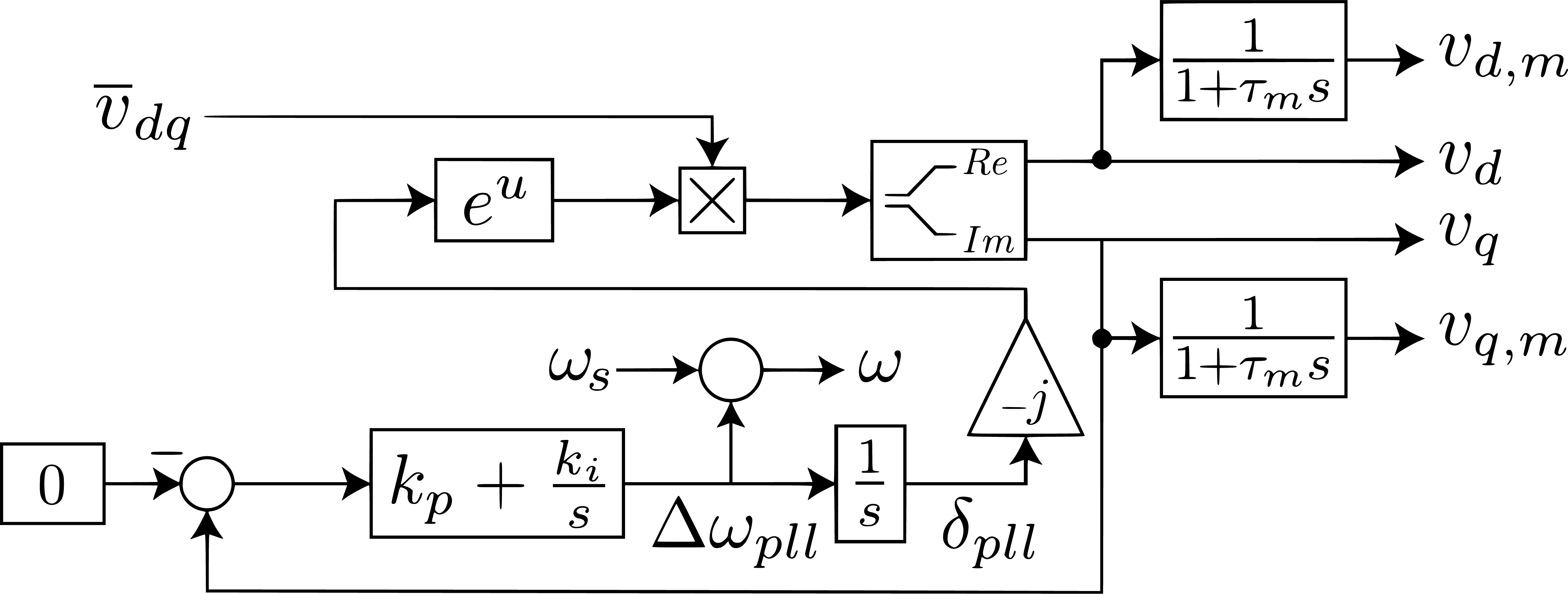}}
    \vspace{-5pt}
    \caption{Phase-locked loop (PLL) [parameters: $k_{p}$ = $101$, $k_{i}$ = $2562$ for $20$ Hz bandwidth \cite{pll_impact}, $\tau_{m}$ = $1$ ms].}
    \label{fig:pll_control}
\end{figure}

\begin{figure}[h!]
    \centerline{
    \includegraphics[width=0.5\textwidth]{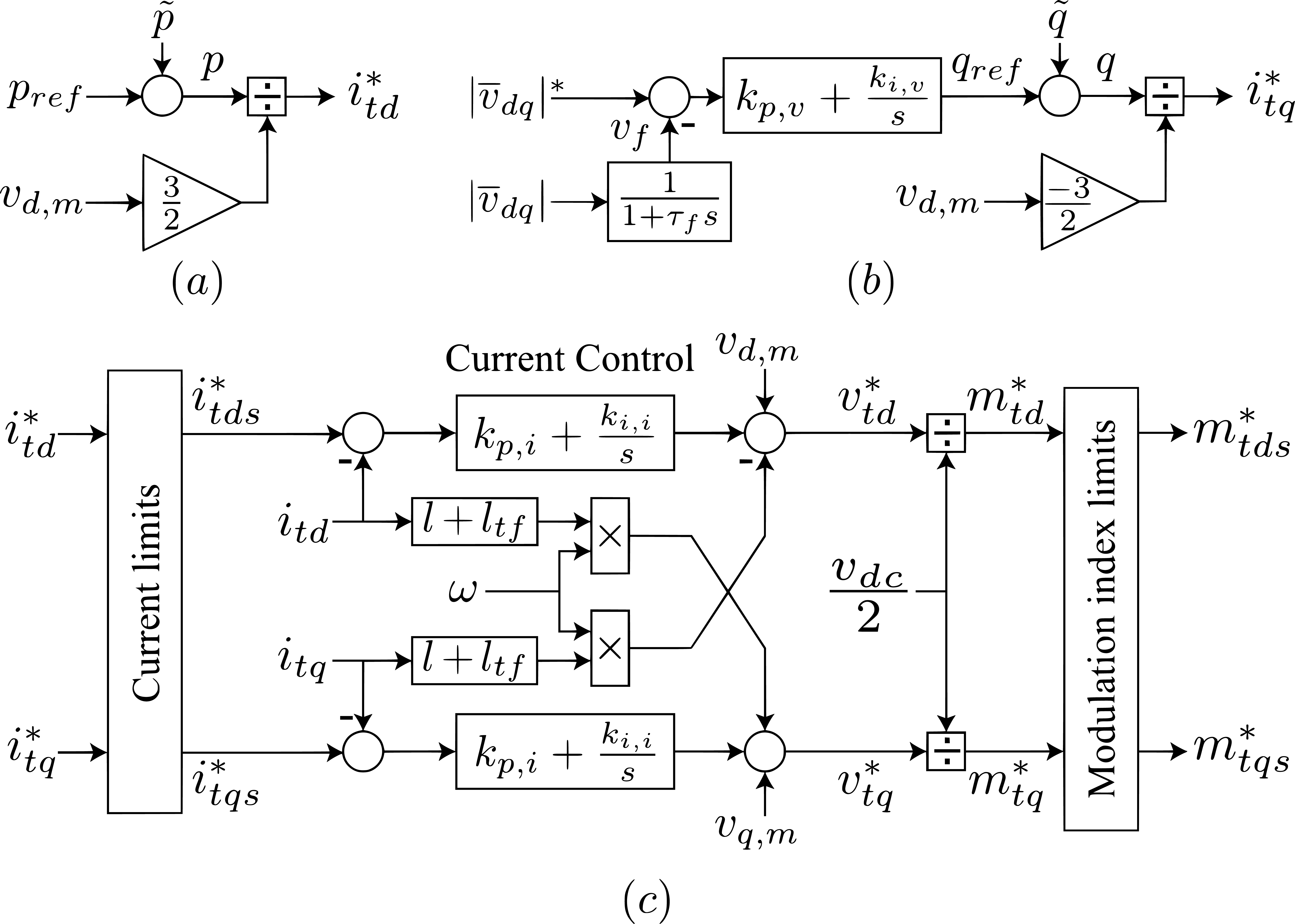}}
    \vspace{-5pt}
    \caption{\nrc{(a) Active power control, (b) outer voltage control, and (c) inner current control. [parameters: $p_{ref}$ = $7.00$ pu, $\tau_{f}$ = $50$ ms, $s_{base}$ = $100$ MVA, $v_{ac,base}$ = $20$ kV]. Supplementary control inputs $\Tilde{p}$ and $\Tilde{q}$ are used for SSO damping.}}
    \label{fig:gflc_control}
\end{figure}

Current references $i_d^*$ and $i_q^*$ are generated by the active power control and outer voltage control loop, respectively, as depicted in Fig.~\ref{fig:gflc_control}(a) and (b). A PI controller, that regulates the voltage magnitude $|\overline{v}_{dq}|$ at the point of common coupling, generates the respective reactive power reference \sina{$q_{ref}$} for the converter\sina{, which is later modulated by the supplementary reactive power control signal $\widetilde{q}$}. \sina{Similarly, the reference active power $p_{ref}$ is modulated by the supplementary active power control signal $\widetilde{p}$.} The ac current limits are imposed to protect the converter under short circuit faults in the grid considering a $15$\% overloading capacity. Standard inner current control loops shown in Fig.~\ref{fig:gflc_control}(c) are employed to regulate current through the series impedance of the filter and the transformer. Moreover, they generate the modulation indices $m_{td}^*$ and $m_{tq}^*$ for the switching circuit, which in-turn are determined by the ac terminal voltage references $v_{td}^*$ and $v_{tq}^*$, respectively. The magnitude of the modulation index is limited to $1$. The supplementary control inputs $\widetilde{p}$ and $\widetilde{q}$ are used for SSO damping, which are produced by a robust adaptive controller as described in Section~\ref{sec:ControlDevelop}.

\subsection{Modeling of SGs, Transmission Network, and Loads}
\nrc{SPC-based model:} The SGs consider $8$th-order subtransient models in SPC framework with stator transients in their respective $d$-$q$ frames rotating at corresponding rotor speeds. Additionally, the models include associated turbines, governors, and exciters. The transmission network is represented using a SPC-based dynamic model of the lumped $\pi$-section in a synchronously rotating $D$-$Q$ reference frame. Constant impedance loads are represented in the same $DQ$ frame by dynamic models consisting of parallel $r$-$l$-$c$ elements at the respective load buses.

\nrc{QPC-based model: For larger systems, a QPC-based model is used that considers a $6$th-order SG model and an algebraic transmission line and load models.}

\section{Robust Adaptive Supplementary SSO Damping Control}\label{sec:ControlDevelop}
As explained in the Introduction, damping weak-grid SSOs involving GFLCs is the objective of this paper. To this end, a robust adaptive SSO damping control development is presented in this section, which is divided into the following steps -- (A) Problem Formulation and (B) Controller Design.  

\subsection{Problem Formulation}\label{sec:ProbFormulation}
In the first step of problem formulation, our goal is to mathematically model the uncertainties in system operating condition affected by factors including load flow and system topology, in addition to unmeasurable disturbances within the system that can impact SSO modes and damping performance. As shown in Fig.~\ref{fig:UncertaintyMdl}, we represent the grid with uncertainties using a Thevenin's equivalent circuit connected to POI of the GFLC (bus $2$, in this case). The uncertainties come from unknown but bounded parameters $r_g\in\mathbb{R}_{++}$, $l_g\in\mathbb{R}_{++}$, and $c_g\in\mathbb{R}_{++}$, in addition to unknown Thevenin's voltage $\bar{v}_{gdq}\in \mathcal{C}$. Finally, two important aspects are worth noting:\\
1. The uncertainties in series resistance $r_g$, series inductance $l_g$, and the disturbance $\bar{v}_{gdq}$ can be lumped into the current $\bar{i}_{gdq}\in \mathcal{C}$. \nrc{Since the locations of the $d$ and the $q$ axes are determined by the PLL dynamics, its impact will be reflected in $\bar{i}_{gdq}$.}\\
2. The uncertainty in shunt capacitance $c_g$ needs to be considered separately.\\

\begin{figure}[h!]
    \centerline{    \includegraphics[width=0.30\textwidth]{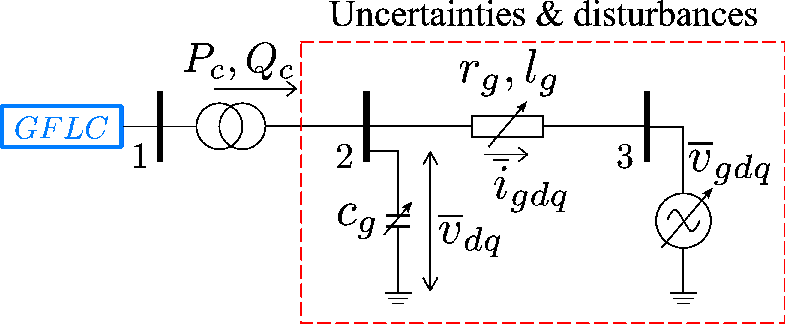}}
    \vspace{-5pt}
    \caption{Model of uncertainty in system parameters and disturbances of the equivalent grid model connected to the GFLC.}
    \label{fig:UncertaintyMdl}
\end{figure}

The proposed uncertainty model has the capability of considering a range of load flow conditions including different power factors, change in topology and grid strengths, and unknown disturbances within the grid. Mathematically, the POI voltage dynamics in the $dq$ coordinates can be expressed as follows
\begin{align}\label{eqn: voltage.dyn}
    \left\{\begin{array}{rl}
 c_g\dot{v}_d&=c_g\omega v_q+\frac{2}{3v_d}p-i_{gd}\\
 c_g\dot{v}_q&=-c_g\omega v_d-\frac{2}{3v_d}q-i_{gq}\end{array}\right.,
\end{align}
where $v_d\in\mathbb{R}$, and $v_q\in\mathbb{R}$, are respectively, the $d$- and $q$-component of the POI voltage, $\omega\in\mathbb{R}_{++}$ is the PLL frequency, which is measurable, $c_g$ is unknown but bounded such that $c_g\in\begin{bmatrix}
   \underbar{c}_{g},\overline{c}_g
\end{bmatrix}$,  where $\underbar{c}_g\in\mathbb{R}_{++}$, and $\overline{c}_g\in\mathbb{R}_{++}$ are known constants. Moreover, $i_{gd}\in\mathcal{L}_{2e}$, and $i_{gq}\in\mathcal{L}_{2e}$  are the $d$- and $q$- components of the grid current disturbance, \nrc{$q\in\mathbb{R}$, and $p\in\mathbb{R}$ are the reactive and active power control inputs to be designed as shown in Fig.~\ref{fig:gflc_control}. Note that $Q_c$ and $P_c$ are the actual reactive and real power outputs of the GFLC at the POI as shown in Figs~\ref{fig:circuit_gflc} and \ref{fig:UncertaintyMdl} that are different from $q$ and $p$. Equation \eqref{eqn: voltage.dyn} assumes $v_d$ $\approx$ $v_{d, m}$, since $\tau_m$ is $1$ ms, which is very small.  Also, \eqref{eqn: voltage.dyn} assumes that the inner current control loops shown in Fig.~\ref{fig:gflc_control}(c) are significantly faster compared to the outer loops, so that an ideal tracking of current references can be assumed.} Moreover, we do not take the limits on the current references in our problem formulation, which will be left for future work. 


In the second step of the problem formulation, we describe the overall control objectives as the following: \textit{Find a control law along with an adaptation law such that the SSOs are damped within a certain time frame without retuning or resynthesis of legacy controls outlined in \ref{sec:GFLC} while being robust to disturbance and uncertainties in \eqref{eqn: voltage.dyn}.}


 
The overall control objective is divided into several control sub-objectives as follows 
\begin{enumerate}\label{ctrl.subobj}
 \item A supplementary SSO damping control philosophy needs to be adopted that modulates the real and reactive power reference signals $p_{ref}$ and $q_{ref}$ using supplementary control inputs $\widetilde{p}$ and $\widetilde{q}$, respectively, as shown in Fig.~\ref{fig:gflc_control}(a), (b). This avoids altering existing controllers.
 \item The SSOs should be damped within a predetermined settling time $\bar{\tau}$.
 \item Due to saturation limits in the inner current control loops (see, Fig.~\ref{fig:gflc_control}(c)), the control gain of the supplementary control should generate small variations in $\widetilde{p}$ and $\widetilde{q}$ in contrast to $p_{ref}$ and $q_{ref}$.
\item To make the controller robust against the grid current disturbance, the $\gamma$ upper bound on the $\mathcal{L}_2$ gain from the disturbance to the outputs should be as small as possible. However, this comes at the expense of increasing the control gain, and then the control signal. Hence, there is a trade off between having lower value for $\gamma$ and the control gain of the supplementary control.
\item The parametric uncertainty in the grid capacitance $c_g$ should be mitigated via an adaptation law. 
\end{enumerate}

\subsection{Controller Design}\label{sec:Design}
The filtered voltage dynamics is as follows
\begin{align}\label{eqn:filter.dyn}
    \dot{v}_f=-\sina{\beta}v_f+\sina{\beta}v_n,
\end{align}
where $v_n\triangleq\left\|\mathbf{v}\right\|$ with $\mathbf{v}=\begin{bmatrix}
    v_d\\
    v_q
\end{bmatrix}$, and $v_f\in\mathbb{R}$ is the magnitude of the filtered POI voltage (see Fig.~\ref{fig:gflc_control}(b)), and $\sina{\beta}=\frac{1}{\tau_f}\in\mathbb{R}_{++}$ with $\tau_f$ is the known time constant of the filter. 

  Denote the filtered POI voltage tracking error as follows
\begin{align}\label{eqn:filt.err}
     \widetilde{v}_f&\triangleq v_f-v_{n0},
\end{align}
where $v_{n0}\triangleq\left\|\mathbf{v}_0\right\| = |\overline{v}_{dq}|^*$ is the pre-disturbance nominal voltage with 
$\mathbf{v}_0=\begin{bmatrix}
    v_{d0}&
    v_{q0}
\end{bmatrix}^\top$. Similarly, define the POI voltage tracking error as follows
\begin{align}\label{eqn:volt.er}
     \widetilde{v}_n&\triangleq v_n-v_{n0}.
\end{align}
Denote the output of the PI controller in Fig.~\ref{fig:gflc_control}(b) with gains $k_{p,v},~k_{i,v} \in \mathbb{R}_{++}$  normalized w.r.t. $k_{p,v}$ as follows 
\begin{align}\label{eqn:er+inter}
\eta \triangleq \widetilde{v}_{f}+\psi\underbrace{\int_0^t \widetilde{v}_{f} (\tau)d\tau}_{\text{$\widetilde{v}_{fi}$}},
\end{align}
where $ \psi \triangleq \frac{k_{i,v}}{k_{p,v}} $, and $\eta = \frac{q_{ref} - q(0)}{k_{p,v}}$ with $q(0)$ as the pre-disturbance nominal value of $q$. Taking the first time-derivative of~\eqref{eqn:er+inter} yields
\begin{align}\label{eqn:etadot}
    \dot{\eta}=
\dot{v}_f+\psi\widetilde{v}_f.
\end{align}
Inserting the filter dynamics from~\eqref{eqn:filter.dyn} in \eqref{eqn:etadot} results in
\begin{align} \label{eqn:etadot2}
    \dot{\eta}=-\sina{\beta}v_f+\sina{\beta}v_n+\psi\widetilde{v}_f.
\end{align}
Next, applying the errors defined in~\eqref{eqn:filt.err} and~\eqref{eqn:volt.er} in \eqref{eqn:etadot2} leads to
\begin{align*}
\dot{\eta}&=-\sina{\beta}\left(\widetilde{v}_f+v_{n0}\right)+\sina{\beta}\left(\widetilde{v}_n+v_{n0}\right)+\psi\widetilde{v}_f\\
&=\left(\psi-\sina{\beta}\right)\widetilde{v}_f+\sina{\beta}\widetilde{v}_n,
\end{align*}
then using~\eqref{eqn:er+inter} yields
\begin{align}\label{eqn:filt.track.dyn}
    \dot{\eta}=\left(\psi-\sina{\beta}\right)\left(\eta-\psi\widetilde{v}_{fi}\right)+\sina{\beta}\widetilde{v}_n.
\end{align}
Taking the time-derivative of the terminal voltage tracking error in~\eqref{eqn:volt.er}, we get
\begin{align*}
    \dot{\widetilde{v}}_n&=\dot{v}_n\\
&=\frac{\left(v_d\dot{v}_d+v_q\dot{v}_q\right)}{v_n}.
\end{align*}
Next, using the terminal voltage dynamics in~\eqref{eqn: voltage.dyn} yields 
\begin{align}\label{eqn:nerr_dyn}
\dot{\widetilde{v}}_n&=\frac{v_d}{v_n}\left(\omega v_q+\frac{2}{3c_gv_d}p-\frac{i_{
gd}}{c_g}\right)\\\nonumber
&+\frac{v_q}{v_n}\left(-\omega v_d-\frac{2}{3c_gv_d}q-\frac{i_{gq}}{c_g}\right)\\\nonumber
&=\frac{2}{3c_g v_n}p-\frac{2v_q}{3c_gv_dv_n}q-\frac{v_d}{c_gv_n}i_{gd}-\frac{v_q}{c_g v_n}i_{gq}.
\end{align}
Define $\sina{\sigma_g}\triangleq\frac{1}{c_g}$, one can rewrite~\eqref{eqn:nerr_dyn} as follows
\begin{align}\label{eqn:openLoopSys.Dyn.}
    \dot{\widetilde{v}}_n=&\sina{\sigma_g}\left(\frac{2}{3\sina{v_n}}p-\frac{2 v_q}{3v_d\sina{v_n}}q\right)-\sina{\sigma_g}\left(\frac{v_d}{v_n}i_{gd}+\frac{ v_q}{v_n}i_{gq}\right).
\end{align}
Define
\begin{align}\label{eqn:w}
    w\triangleq & \frac{v_d}{v_n}i_{gd}+\frac{ v_q}{v_n}i_{gq},
\end{align}
\sina{
\begin{align}\label{eqn:u}
    u\triangleq u_p+u_q,
\end{align}
where 
\begin{align}\label{eqn:up}
u_p\triangleq\frac{2}{3v_n}p
\end{align}
\begin{align}\label{eqn:uq}
u_q\triangleq-\frac{2v_q}{3v_dv_n}q.
\end{align}
\begin{remark}
    Since the actual control inputs are $p$ and $q$, then the procedure of designing the controller is to design $u_p$, and $u_q$. 
   Subsequently, we can obtain $p$, and $q$ using~\eqref{eqn:up},~\eqref{eqn:uq} that can be rewritten as follows
    \begin{align}\label{eqn:p}
    p = \frac{3}{2}v_n u_p,    
    \end{align}
    \begin{align}\label{eqn:q}
        q=-\frac{3v_d}{2v_q}v_nu_q.
    \end{align}
\end{remark}}
Then~\eqref{eqn:openLoopSys.Dyn.} can be rewritten as follows
\begin{align}\label{eqn:compact.open.loop.error.dyn}
    \dot{\widetilde{v}}_n=\sina{\sigma_g} u-\sina{\sigma_g}w.
\end{align}
Define $\widetilde{u}\triangleq u-u_0$, and $\widetilde{w}\triangleq w-w_0$, where $w_0=\frac{v_{d0}}{v_{n0}}i_{gd0}+\frac{v_{q0}}{v_{n0}}i_{gq0}$, $u_0=w_0$, then~\eqref{eqn:compact.open.loop.error.dyn} can be written as follows
\begin{align}\label{eqn:compact.open.loop.error2}
    \dot{\widetilde{v}}_n=\sina{\sigma_g}\widetilde{u}-\sina{\sigma_g}\widetilde{w}.
\end{align}
Define the lumped disturbance $\widetilde{w}_{\sina{\sigma}}\triangleq \sina{\sigma_g} \widetilde{w}$, then~\eqref{eqn:compact.open.loop.error2} is rewritten as follows
\begin{align}\label{eqn:finalOpenLoopDyn.}
    \dot{\widetilde{v}}_n=\sina{\sigma_g}\widetilde{u}-\widetilde{w}_{\sina{\sigma}}.
\end{align}
Subsequently, the control law is designed as follows
\begin{align}\label{eqn:Cntrl.Law}
\widetilde{u}=\underbrace{\frac{-k}{\sina{\widehat{\sigma}_g}}\widetilde{v}_n}_{\sina{\text{$\widetilde{u}_p$}}}-k_{p,v}\eta,
\end{align}
where $\sina{\widehat{\sigma}_g}$ is the estimated parameter of \sina{$\sigma_{g}$} and $k\in\mathbb{R}_{++}$ is the gain of the supplementary control. The estimated parameter $\sina{\widehat{\sigma}_g}$ is calculated based on the adaptation law \eqref{eqn:adaptive law} derived later.

From the control designed in~\eqref{eqn:Cntrl.Law} and \sina{using~\eqref{eqn:p}}, the active power modulation signal shown in Figs~\ref{fig:gflc_control}(a), can be~\sina{designed as follows}
\begin{align}
        \widetilde{p}=&-\frac{3kv_n}{2\sina{\widehat{\sigma}_g}}\widetilde{v}_n
\end{align}
where $\widetilde{p}=p-p_{ref}$ is the active power modulation signal.
\nrc{\begin{remark}
In this paper, we consider the reactive power modulation signal~\sina{shown in Fig.3(b)}  $\widetilde{q}=0$. The underlying reason behind this is that from~\eqref{eqn:q}, $v_q$ will appear in the denominator, and when PLL achieves tracking, this would make the reactive power modulation signal $\widetilde{q}$ unbounded.
\end{remark}}
Substituting the control law developed in~\eqref{eqn:Cntrl.Law} into~\eqref{eqn:finalOpenLoopDyn.} yields 
\begin{align}\label{eqn:closed.loop.er.dyn}
    \dot{\widetilde{v}}_n=-k\sina{\frac{\sigma_g}{\widehat{\sigma}_g}}\widetilde{v}_n-\sina{\sigma_g}k_{p,v}\eta-\widetilde{w}_{\sina{\sigma}}.
\end{align}
Define \sina{parameter estimation error} $\sina{\widetilde{\sigma}_g}\triangleq \sina{\sigma_g}-\sina{\widehat{\sigma}_g}$, then the first term in~\eqref{eqn:closed.loop.er.dyn} can be written as follows  
\begin{align}\label{eqn:closed.loop.er.dyn_final}
    \dot{\widetilde{v}}_n&=-k\frac{\left(\sina{\widetilde{\sigma}_g}+\sina{\widehat{\sigma}_g}\right)}{\sina{\widehat{\sigma}_g}}\widetilde{v}_n-\sina{\sigma_g}k_{p,v}\eta-\widetilde{w}_{\sina{\sigma}}\\\nonumber
    &=-k\frac{\sina{\widetilde{\sigma}_g}}{\sina{\widehat{\sigma}_g}}\widetilde{v}_n-k\widetilde{v}_n-\sina{\sigma_g}k_{p,v}\eta-\widetilde{w}_{\sina{\sigma}}.
\end{align}
\\
Let
\begin{align}\label{eqn:matrix P}
    P_F\triangleq\begin{bmatrix}
        P&&\mathbf{0}\\
        \mathbf{0}&&1
    \end{bmatrix}\succ 0,\hspace{1mm} P&=\begin{bmatrix}
           p_v&& p_{v\eta}&& p_{vi}\\\star&& p_{\eta}&& p_{\eta i}\\ \star&&\star&& p_i
       \end{bmatrix} \succ 0,
          \end{align}
          \begin{align}\label{eqn:matrix Q}
 Q_F\triangleq\begin{bmatrix}
     Q&&\mathbf{0}\\
     \mathbf{0}&&0
 \end{bmatrix},\hspace{1mm}Q&=\begin{bmatrix}
           Q_v&&Q_{v\eta}&&Q_{vi}\\
           \star&&Q_{\eta}&&Q_{\eta i}\\
           \star&&\star&&Q_i
       \end{bmatrix}\succ 0,
\end{align}
   
  with
    \begin{align}\label{eqn:Qv}
        Q_v&=p_vk-p_{v\eta}\sina{\beta}-1-\frac{1}{4\gamma_v^2},
        \end{align}
\begin{align}\label{eqn:Qeta}
       Q_{\eta}&=-p_{\eta}\left(\psi-\sina{\beta}\right)+p_{v\eta}k_{p,v}\sina{\sigma_g}-p_{\eta i}-1-\frac{1}{4\gamma_{\eta}^2},
        \end{align}
       \begin{align}\label{eqn:Qi}
  Q_i&=p_i\psi+p_{\eta i}\psi\left(\psi-\sina{\beta}\right)-1-\frac{1}{4\gamma_i^2},
  \end{align}
  \begin{align}\label{eqn:Qveta}
  Q_{v \eta}&=\frac{1}{2}\left(p_v k_{p,v}\sina{\sigma_g}-p_{\eta}\sina{\beta}-p_{v\eta}\left(\psi-\sina{\beta}\right)-p_{vi}+p_{v\eta}k\right),
  \end{align}
  \begin{align}\label{eqn:Qetai}
Q_{\eta i}&=\frac{1}{2}\left(p_{\eta}\psi\left(\psi-\sina{\beta}\right)+p_{vi}k_{p,v}\sina{\sigma_g}-p_i+p_{\eta i} \psi\right),
\end{align}
\begin{align}\label{eqn:Qvi}
  Q_{vi}&=\frac{1}{2}\left(p_{v\eta}\psi\left(\psi-\sina{\beta}\right)+p_{vi}\left(\psi+k\right)-p_{\eta i} \sina{\beta}\right),
    \end{align}
   for all $\sina{\sigma_g}\in\begin{bmatrix}
       \sina{\underline{\sigma}_g},\sina{\overline{\sigma}_g}
   \end{bmatrix}$. See Appendix for more details. In addition, $p_v\in\mathbb{R}$, $\sina{p}_{v\eta}\in\mathbb{R}$, $p_{vi}\in\mathbb{R}$, $p_{\eta}\in\mathbb{R}$, $p_{\eta 
 i}\in\mathbb{R}$, $p_i\in\mathbb{R}$. Moreover, $\gamma_v\in\mathbb{R}_{++}$, $\gamma_{i}\in\mathbb{R}_{++}$, and $\gamma_{\eta}\in\mathbb{R}_{++}$ are the the upper bound on the $\mathcal{L}_2$ gain from the lumped disturbance to the POI voltage norm error $\widetilde{v}_n$, integral of the filtered POI voltage error $\widetilde{v}_{fi}$, and the normalized PI controller output $\eta$, respectively. 
 
\sina{Since we are interested in minimizing $\gamma_v$, $\gamma_{\eta}$, and $\gamma_{i}$, one way to achieve this for all of them simultaneously is by minimizing the maximum of them defined by $\overline{\gamma}={max}\left\{\gamma_v,\gamma_{\eta},\gamma_{i}\right\}$. Next, to achieve the control sub-objectives, we need to formulate an optimization problem for damping SSOs and minimizing the upper bounds on the $\mathcal{L}_2$ gain of the closed-loop system from the disturbance to the outputs while minimizing the control gain $k$. To achieve the desired settling time $\Bar{\tau}$, we define $\alpha\triangleq\frac{1}{\Bar{\tau}} \in \mathbb{R}_{++}$ and then formulate it in the optimization problem. To perform a trade off between minimizing $\overline{\gamma}$ and the control gain $k$, we consider the weighting parameter $\mu$ in the objective functional. Since the terms $\frac{1}{\gamma_v^2}$, $\frac{1}{\gamma_{\eta}^2}$, and $\frac{1}{\gamma_i^2}$ in~\eqref{eqn:Qv},~\eqref{eqn:Qeta}, and~\eqref{eqn:Qi} are non-convex, we define $x_v\triangleq\frac{1}{\gamma_v^2}$, $x_{\eta}\triangleq\frac{1}{\gamma_{\eta}^2}$, $x_i\triangleq\frac{1}{\gamma_i^2}$, and to make it consistent with $\overline{\gamma}$, we define $\underline{x}\triangleq\frac{1}{\overline{\gamma}\sina{^2}}$. Then, to minimize $\overline{\gamma}$, we need to maximize $\underline{x}$ or minimizing $-\underline{x}$. Since we aim to attenuate the effect of the disturbance on the outputs as much as possible, we consider an additional constraint in the optimization problem such that $\overline{\gamma}\in(0,1]$. Consequently, the convex optimization problem can be set up as follows} 
\begin{align}\label{eqn:optimization prob.}
    minimize \hspace{20pt} & \hspace{-2pt} -\underline{x}+\mu k \\ \nonumber
    {subject~to} \hspace{15pt} & k>0,\hspace{1mm}
P \succ 0,\hspace{1mm}
      \underline{Q}-\alpha P \succ 0,\hspace{1mm}
     \overline{Q}-\alpha P \succ 0,\\\nonumber&\hspace{1mm}
    \underline{x}\geq1,\hspace{1mm}
    x_v\geq \underline{x},\hspace{1mm}x_{\eta}\geq\underline{x},\hspace{1mm}
      x_i\geq \underline{x}.
\end{align}

\nrc{If the system is robustly stabilizable with the proposed controller, then \eqref{eqn:optimization prob.} will have a solution if $\alpha$ is sufficiently small. However, the smaller the value of $\alpha$, the larger the time it takes for the oscillations to decay. Hence, a search should be done to obtain the largest value of $\alpha$ for which \eqref{eqn:optimization prob.} is feasible. As for $\mu$, several values should be considered to test different trade-offs between magnitude of the gain $k$ and mitigation of perturbations. For small values of $\mu$, one has strong mitigation of perturbation effects at the expense of having a gain $k$ of large magnitude. For large values of $\mu$, there is more emphasis on bounding the magnitude of the gain $k$ but this will come at the expense of having worse response to perturbations. So, different values of $\mu$ should be tested and the one with the best compromise should be chosen.}

Furthermore, $\underline{Q}$ and $\overline{Q}$ are obtained using $\sina{\underline{\sigma}_g}$ and $\sina{\overline{\sigma}_g}$, respectively.
Consequently, the adaptive law is designed as follows
\begin{align}\label{eqn:adaptive law}
    \sina{\dot{\widehat{\sigma}}_g}=-\frac{k\left(p_v\widetilde{v}_n+p_{v\eta}\eta+p_{vi}\widetilde{v}_{fi}
    \right)\widetilde{v}_n}{\sina{\widehat{\sigma}_g}},
\end{align}
where $p_v$, $p_{v\eta}$, and $p_{vi}$, are the adaptation gains obtained from solving the optimization problem~\eqref{eqn:optimization prob.}. 
\sina{The entire design procedure of the developed controller is summarized in a flowchart illustrated in Fig.~\ref{fig:control_proced}.}

\begin{figure}[h!]
    \centerline{\includegraphics[width=0.5\textwidth]{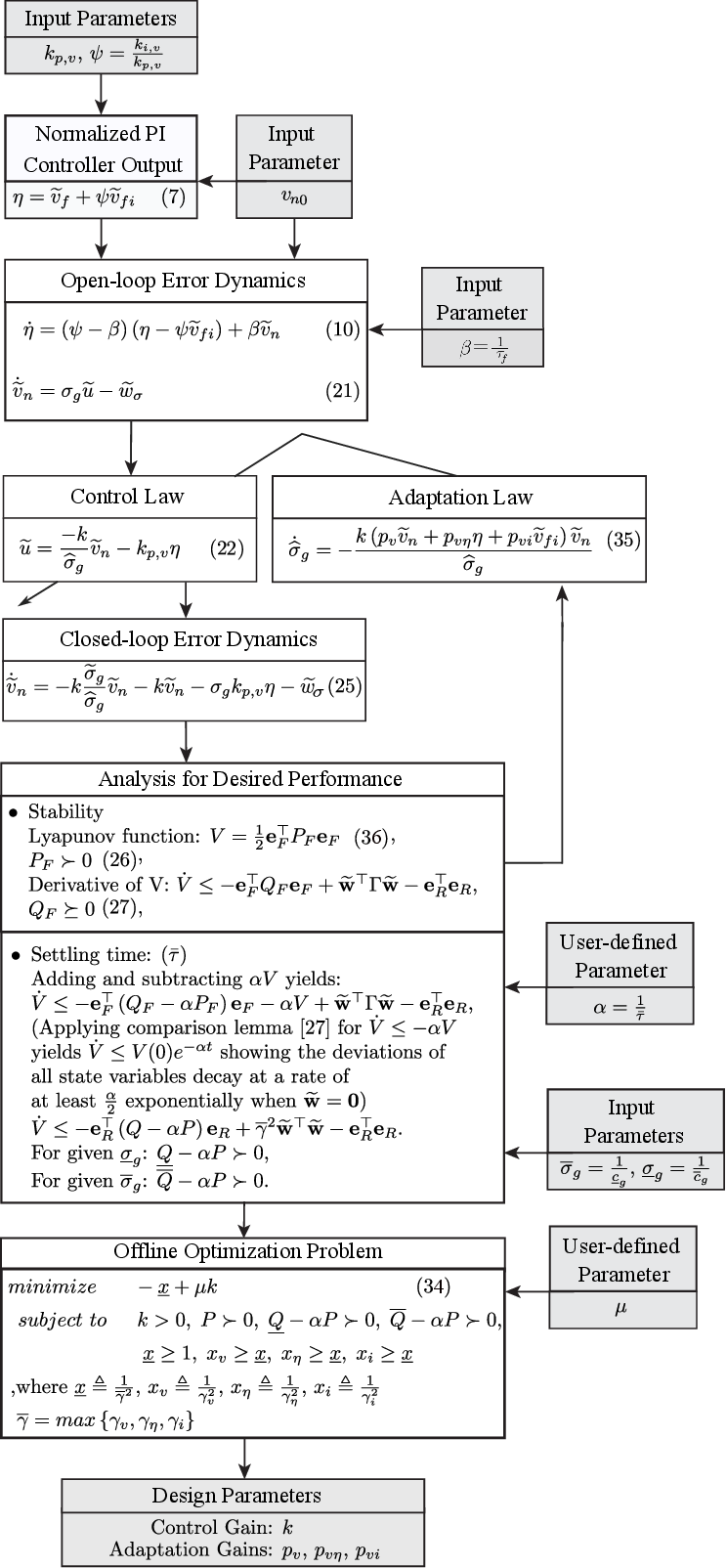}}
    \vspace{-5pt}
    \caption{\sina{Developed robust-adaptive supplementary control design procedure.}}
    \label{fig:control_proced}
\end{figure}
\begin{theorem}\label{Thm: theorem1}
    Assume the optimization problem~\eqref{eqn:optimization prob.} is feasible, then the control law in~\eqref{eqn:Cntrl.Law} along with the adaptive control law in~\eqref{eqn:adaptive law} 
  results in $\overline{\gamma}-\text{dissipativity}$ for the closed-loop error dynamics in~\eqref{eqn:closed.loop.er.dyn_final}.
\end{theorem}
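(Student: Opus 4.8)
The plan is to certify the dissipation inequality of the $\gamma$-dissipativity definition with a single quadratic storage function, using the adaptation law to annihilate the sign-indefinite parametric term and then reducing the remainder to a sign-definite quadratic form whose negativity is exactly what the LMI constraints of \eqref{eqn:optimization prob.} encode. First I would collect the tracking errors into $\mathbf{x}\triangleq[\widetilde{v}_n,\ \eta,\ \widetilde{v}_{fi}]^\top$ and propose the storage function
\begin{align*}
V=\tfrac12\mathbf{x}^\top P\mathbf{x}+\tfrac12\widetilde{\sigma}_g^2
=\tfrac12\bigl[\mathbf{x}^\top\ \widetilde{\sigma}_g\bigr]\,P_F\,\bigl[\mathbf{x}^\top\ \widetilde{\sigma}_g\bigr]^\top,
\end{align*}
with $P_F,P$ as in \eqref{eqn:matrix P}. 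Because \eqref{eqn:optimization prob.} enforces $P\succ0$ while the lower-right block of $P_F$ is $1$, we get $V\ge0$, so $V$ is admissible. The state equations are \eqref{eqn:closed.loop.er.dyn_final} for $\dot{\widetilde v}_n$, \eqref{eqn:filt.track.dyn} for $\dot\eta$, and $\dot{\widetilde v}_{fi}=\widetilde v_f=\eta-\psi\widetilde v_{fi}$ read off from \eqref{eqn:er+inter}; treating $\sigma_g$ as an unknown constant gives $\dot{\widetilde\sigma}_g=-\dot{\widehat\sigma}_g$ supplied by \eqref{eqn:adaptive law}.

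The crux is the adaptive cancellation. Differentiating, $\dot V=\mathbf{x}^\top P\dot{\mathbf{x}}+\widetilde\sigma_g\dot{\widetilde\sigma}_g$. The only estimation-error-dependent term in \eqref{eqn:closed.loop.er.dyn_final} is $-k(\widetilde\sigma_g/\widehat\sigma_g)\widetilde v_n$; multiplied by the first entry of $P\mathbf{x}$, namely $p_v\widetilde v_n+p_{v\eta}\eta+p_{vi}\widetilde v_{fi}$, it produces
\begin{align*}
-\frac{k\widetilde\sigma_g}{\widehat\sigma_g}\bigl(p_v\widetilde v_n+p_{v\eta}\eta+p_{vi}\widetilde v_{fi}\bigr)\widetilde v_n.
\end{align*}
The adaptation law \eqref{eqn:adaptive law} is constructed so that $\widetilde\sigma_g\dot{\widetilde\sigma}_g$ equals the negative of this expression, so the two cancel identically and all dependence on the unknown $\widetilde\sigma_g$ leaves $\dot V$. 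This is the step where the precise form of \eqref{eqn:adaptive law} (together with well-posedness of the division, i.e.\ $\widehat\sigma_g$ bounded away from zero) is essential.

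After cancellation the surviving dynamics are linear in $\mathbf{x}$ with a coefficient matrix $A(\sigma_g)$ that depends only on the \emph{constant} uncertain $\sigma_g$ and a disturbance channel $B=[-1,\,0,\,0]^\top$, giving $\dot V=\tfrac12\mathbf{x}^\top(PA+A^\top P)\mathbf{x}+\mathbf{x}^\top PB\,\widetilde w_\sigma$. I would take the output $z=\mathbf{x}$ and split the cross term $\mathbf{x}^\top PB\widetilde w_\sigma=-(p_v\widetilde v_n+p_{v\eta}\eta+p_{vi}\widetilde v_{fi})\widetilde w_\sigma$ channel-by-channel with Young's inequality, sending the $\tfrac{1}{4\gamma_v^2},\tfrac{1}{4\gamma_\eta^2},\tfrac{1}{4\gamma_i^2}$ state remainders onto the diagonal and collecting the disturbance remainders into the $\overline\gamma^2\widetilde w_\sigma^2$ supply term. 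Adding the $\|z\|^2$ penalty recasts everything as
\begin{align*}
\dot V\le-\mathbf{x}^\top Q(\sigma_g)\mathbf{x}+\overline\gamma^2\widetilde w_\sigma^2-\|z\|^2,
\end{align*}
and matching coefficients of $\tfrac12(PA+A^\top P)$, the Young remainders, and the output penalty reproduces exactly the entries \eqref{eqn:Qv}--\eqref{eqn:Qvi} of $Q$ in \eqref{eqn:matrix Q} (the $-1$'s are the output penalty, the $-\tfrac{1}{4\gamma^2}$'s the Young remainders).

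Finally, every entry of $Q(\sigma_g)$ is affine in $\sigma_g$, so $Q(\sigma_g)\succeq\alpha P\succ0$ for all $\sigma_g\in[\underline\sigma_g,\overline\sigma_g]$ follows by convex combination from the two vertex constraints $\underline Q-\alpha P\succ0$ and $\overline Q-\alpha P\succ0$ in \eqref{eqn:optimization prob.}, with $\underline Q,\overline Q$ evaluated at $\underline\sigma_g,\overline\sigma_g$. Then $-\mathbf{x}^\top Q\mathbf{x}\le0$ and hence $\dot V\le\overline\gamma^2\widetilde w_\sigma^2-\|z\|^2$, which is the dissipation inequality with supply rate $s=\overline\gamma^2\|\widetilde w_\sigma\|^2-\|z\|^2$, establishing $\overline\gamma$-dissipativity. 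I expect the main obstacle to be the bookkeeping of the reduction step: symmetrizing $PA+A^\top P$, choosing the Young splits, and absorbing the output penalty so that the assembled matrix is \emph{exactly} $Q$ with the listed entries, and then verifying that the affine $\sigma_g$-dependence legitimately reduces the robust requirement to the two-vertex check --- had $Q$ been nonlinear in $\sigma_g$, the endpoint argument would fail and a gridding or S-procedure relaxation would be required.
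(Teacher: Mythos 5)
Your proposal is correct and follows essentially the same route as the paper's proof: the same storage function $V=\tfrac12\mathbf{e}_F^\top P_F\mathbf{e}_F$ (your $\tfrac12\mathbf{x}^\top P\mathbf{x}+\tfrac12\widetilde{\sigma}_g^2$), the same cancellation of the $\widetilde{\sigma}_g$-dependent term by the adaptation law, the same completion-of-squares (your Young splits are exactly the paper's $-\gamma^2(\widetilde{w}+\cdot)^2+\tfrac{1}{4\gamma^2}(\cdot)^2$ manipulations), and the same two-vertex argument exploiting affine dependence of $Q$ on $\sigma_g$. The only cosmetic difference is that the paper states the supply rate in terms of the scaled disturbance vector $\widetilde{\mathbf{w}}=[p_v,\,p_{v\eta},\,p_{vi}]^\top\widetilde{w}_\sigma$ rather than the scalar $\widetilde{w}_\sigma$.
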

\begin{proof}
    See the Appendix. 
\end{proof}

\sina{\begin{remark}
    In principle, it is preferable to consider full matrices for $P_F$, and then $Q_F$ so that there is more flexibility in searching for the control gain $k$; however, in this article $P_F$ is considered of this particular form (as opposed to a full form) in~\eqref{eqn:matrix P} because this greatly simplifies the expressions of elements of $Q$ in~\eqref{eqn:Qv}-\eqref{eqn:Qvi}. As a result, it also reduces the complexity of the optimization problem~\eqref{eqn:optimization prob.} with less decision variables and significantly simplifies the adaptation law in~\eqref{eqn:adaptive law}, which can make its implementation more straightforward.  
\end{remark}}

\nrc{\begin{remark}
    If the optimization problem ~\eqref{eqn:optimization prob.} is feasible for the system under consideration, per Theorem 1, the proposed control law in \eqref{eqn:Cntrl.Law} along with the adaptive control law \eqref{eqn:adaptive law} results in the $\bar{\gamma}$-dissipativity of the closed loop voltage error dynamics described by \eqref{eqn:closed.loop.er.dyn_final}. Notably, this implies that we are not selective in damping a particular mode, but guarantee that the closed-loop voltage error dynamics will not lead to the destabilization of any mode as long as the grid uncertainty model described in Section IV.A. is valid. Therefore, the proposed control will not contribute negatively towards the existing electromechanical modes in the system.
\end{remark}}

The robust adaptive supplementary controller is shown in Fig.~\ref{fig:control_diagram} below in a block diagram form for readers' convenience. \nrc{The control and the adaptation laws involve a few simple operations, which are not computationally expensive.}

\begin{figure}[h!]
    \centerline{\includegraphics[width=0.5\textwidth]{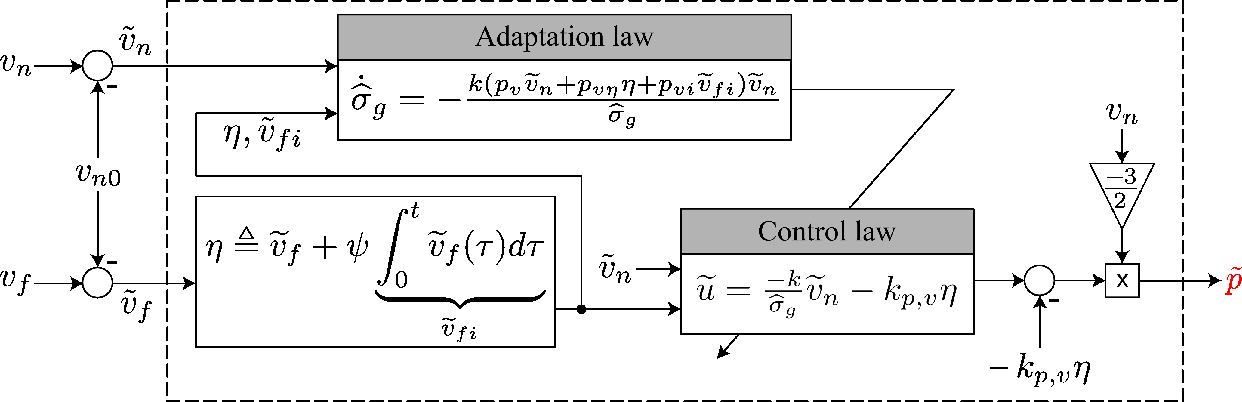}}
    \vspace{-5pt}
    \caption{Block diagram of the proposed controller.}
    \label{fig:control_diagram}
\end{figure}
\section{Case Studies and Results} \label{sec:results}
\subsection{Test Systems}
\nrc{The simulation models employ an averaged model of GFLC along with detailed control loops including the PLL as described in Section~\ref{sec:modeling}, whose parameters are mentioned in the captions of Figs \ref{fig:circuit_gflc}-\ref{fig:gflc_control}.}
As a first step, the effectiveness of the proposed adaptive control law is verified on a space phasor model of a single GFLC connected to an infinite bus through a long transmission line (see Fig.~\ref{fig:UncertaintyMdl}), replicating a weak grid condition. 
The grid has a nominal SCR of about $0.987$, which is very low, and the GFLC generates $700$~MW of power under nominal condition with $r_{g\sina{0}}$ = $0.0140$ pu, $l_{g\sina{0}}$ = $0.1402$ pu, $c_{g\sina{0}}$ = $0.0963$ pu, and $\bar{v}_{gdq}$ = $1$ pu. Linearization around this operating condition reveals the existence of a very poorly-damped SSO mode of $5.36$ Hz as shown in Table~\ref{Tab:test_sys_info}. \nrc{Compass plot of normalized participation factor magnitudes and modeshape angles in Fig.~\ref{fig:mode_shapes_SPC}(a) shows that the dominant states in the SSO mode are the PLL states and the state from the outer voltage control loop delay that oscillate against each other.}

\begin{table}[h!]
\footnotesize
\textcolor{blue}{
\caption{SCR and SSO modes in the test systems}
\label{Tab:test_sys_info}
\centering
\begin{tabular}{|L{1.6cm}| C{0.9cm}| C{2.2cm}| *{2}{C{1cm}|}}
    \hline
    \multirow{2}{*}{\textbf{Test system}} & \multirow{2}{*}{\textbf{SCR}} & \multicolumn{3}{c|}{\textbf{SSO mode}} \\
    \cline{3-5}
    & & Mode & \textbf{$f,~Hz$} & \textbf{$\zeta,~\%$} \\
    \hline
    $1$ GFLC & $0.9871$ & $-0.0953\pm33.6562$ & $5.36$ & $0.28$ \\
    \hline
    $2$GFLC-$2$SG & $0.7436^\text{a}$ & $-0.0579\pm39.9471$ & $6.36$ & $0.15$ \\
    \hline
    \lk{$2$GFLC-$16$SG} & \lk{$0.8056^\text{b}$} & \lk{$-0.0468\pm29.7781$} & \lk{$4.74$} & \lk{$0.16$} \\
    \hline
\end{tabular}}
\end{table}
\vspace{-12pt}\noindent \nrc{\footnotesize{$^\text{a}$ SCR seen from bus $6$.}
\noindent \footnotesize{$^\text{b}$ SCR seen from bus $70$.}}
\normalsize

\begin{table}[h!]
\footnotesize
\lk{
\caption{Sensitivity of the SSO mode pertaining to different scenarios}
\label{Tab:sso_sensitivity}
\centering
\begin{tabular}{|L{1.6cm}| L{3.8cm}| C{2.2cm}|}
    \hline
    \multirow{1}{*}{\textbf{Test system}} & \multicolumn{1}{c|}{\textbf{Scenario}} & \multicolumn{1}{c|}{\textbf{SSO mode}} \\
    \hline
    $1$ GFLC & Output of GFLC$2$ increased by $0.5$\% & $2.1541\pm30.7419$ \\
    \hline
    $2$GFLC-$2$SG & Outage of one of the parallel lines $9$-$10$ & $-0.2220\pm37.1559$ \\
    \hline
    \lk{$2$GFLC-$16$SG} & \lk{Outage of one of the parallel lines $27$-$53$} & \lk{$0.2623\pm29.3559$} \\
    \hline
\end{tabular}}
\end{table}
\vspace{-5pt}
\normalsize


Subsequently, the control law is evaluated in IEEE $2$-area $4$-machine test system \cite{kundur}, replacing G$1$ and G$2$ with identical GFLCs as shown in Fig.~\ref{fig:11_bus_system}. We also introduce two parallel lines between buses $9$ and $10$ while keeping the equivalent parallel impedance same as in the original system. This will be leveraged to simulate line maintenance and line outage scenarios. \textcolor{black}{Both SPC and EMT models are considered.} In each, the SGs are represented by a $8$th-order model including stator transients. Moreover, dynamic models of constant-impedance loads are considered. \textcolor{black}{The transmission lines are dynamically modeled using lumped $\pi$-section and Bergeron models for SPC and EMT, respectively}. Averaged models of GFLCs are used in EMT as in SPC. Linearization of SPC model around nominal GFLC and SG power outputs of $700$ MW each and tie-flow condition of $400$ MW shows an extremely poorly-damped $6.36$ Hz mode with nominal SCR of $0.744$ corresponding to GFLC$2$ as measured at bus $6$, see Table~\ref{Tab:test_sys_info}. \nrc{As before, the compass plot in Fig.~\ref{fig:mode_shapes_SPC}(b) shows that PLL states of both GFLCs oscillate in synch against those of the outer voltage delay states. This indicates that the SSO mode is indeed a weak-grid mode, not an inter-IBR mode. The proposed supplementary control is exclusively applied in GFLC$2$ as the dominant states contributing to the corresponding SSO mode originate from GFLC$2$, see Fig.~\ref{fig:mode_shapes_SPC}(b).} \nrc{This is further supported by the residue magnitudes of the SSO mode that are \lk{$0.0117$} and \lk{$0.0120$} for GFLC1 and GFLC2, respectively. }

\begin{figure}[h!]
    \centerline{
    \includegraphics[width=0.49\textwidth]{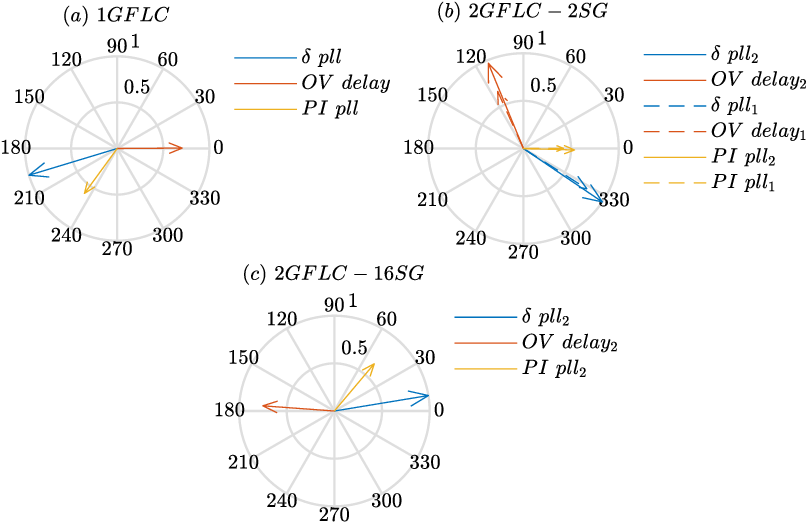}}
    \vspace{-5pt}
    \caption{Compass plots of normalized participation factor magnitudes and modeshape angles of the dominant states contributing to the poorly-damped SSO mode in (a) single-GFLC-infinite-bus system (Fig.~\ref{fig:UncertaintyMdl}), (b) $2$-area $4$-machine test system with $2$ GFLCs (Fig.~\ref{fig:11_bus_system}), (c) modified $5$-area $68$-bus test system with $2$ GFLCs (Fig.~\ref{fig:68_bus_system}). OV: outer voltage control.}
    \label{fig:mode_shapes_SPC}
    \vspace{-5pt}
\end{figure}

\begin{figure}[h!]
    \centerline{
    \includegraphics[width=0.44\textwidth]{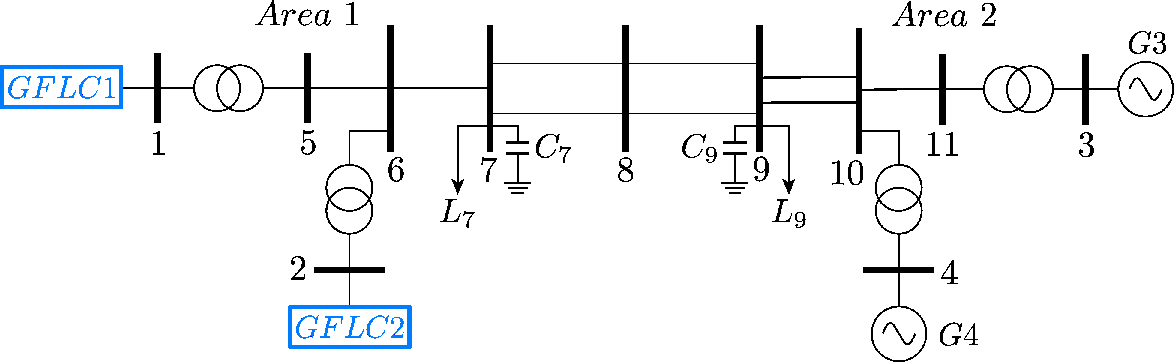}}
    \vspace{-5pt}
    \caption{Modified $2$-area test system with 50\% IBR penetration.}
    \label{fig:11_bus_system}
    \vspace{-5pt}
\end{figure}


\begin{figure}[h!]
    \centerline{
    \includegraphics[width=0.45\textwidth]
    {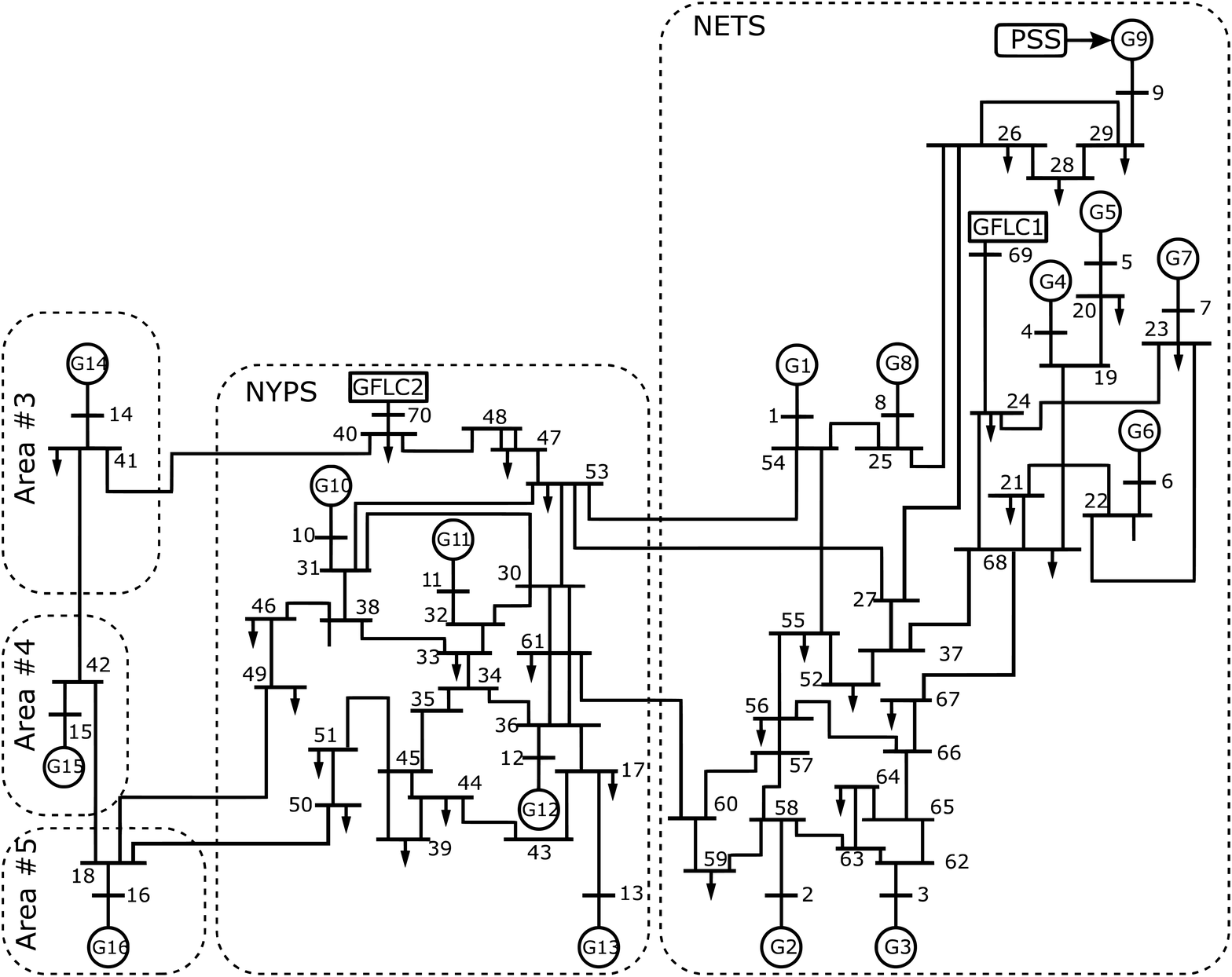}}
    \vspace{-5pt}
    \caption{\nrc{Modified $5$-area $68$-bus test system with $2$ IBRs connected to buses $24$ (GFLC1) and $40$ (GFLC2) using dedicated transmission lines.}}
    \label{fig:68_bus_system}
\end{figure}

\nrc{Finally, we consider the IEEE $5$-area $68$-bus New York-New England system \cite{nilthesis}. The test system was modified by connecting two GFLCs at buses $24$ and $40$ using two transmission lines $24$-$69$ and $40$-$70$, respectively. The QPC model of the system was considered with $6$th-order subtransient model of SGs and algebraic model of the transmission lines and loads. Generators $G1$-$G8$ are equipped with dc exciters, $G9$ has a static exciter with a power system stabilizer (PSS), whereas $G10$-$G16$ have manual excitation. Averaged GFLC models are used as in the previous systems. Modal analysis reveals a poorly-damped SSO mode of frequency $4.74$ and damping ratio $0.0016$ with significant participation from GFLC$2$ connected to bus $70$, see Fig.~\ref{fig:68_bus_system}. The SCR from bus $70$ is $0.8056$, which is shown in Table II.} \nrc{Residue magnitudes of the SSO mode are \lk{$0.0001$} and \lk{$0.0371$} for GFLC1 and GFLC2, respectively, which led to the choice of GFLC2 as the actuator for supplementary control. Finally, Table III reveals the sensitivity of the SSO modes with respect to changes in operating conditions. These changes include increase in GFLC power output and outage of certain lines.}

The test systems are implemented and simulated in Matlab/Simulink \cite{simulink} environment. \textcolor{black}{In addition, the $2$-area test system is also implemented in EMTDC/PSCAD \cite{pscad}.} We investigate multiple cases to assess the viability of the control law, verifying its performance under realistic disturbance/fault conditions within test systems. For each case, we present three scenarios: (1) no supplementary control of GFLC (denoted by $NC$), (2) classical linear state feedback-based supplementary control of GFLC discussed in Section~\ref{sec:Numerical} (denoted by $WC - State\ feedback$), and (3) robust adaptive supplementary control of GFLC proposed in Section~\ref{sec:ControlDevelop} (denoted by $WC - Adaptive$). Due to space restrictions, only a selection of the case studies is discussed in detail.

\subsection{Numerical Design Parameters}\label{sec:Numerical}
The adaptive control design parameters are chosen so that the constraints in the optimization problem~\eqref{eqn:optimization prob.} are satisfied. For both single-GFLC and multi-GFLC system, the maximum allowable settling time is $\bar{\tau
}=15$ s, which gives $\alpha=0.066$. For the single-GFLC system, the uncertainty bounds on $\sina{\sigma_g}$ are given by $8.1487\leq\sina{\sigma_g}\leq 10.3896$ pu. Given $\mu=2$, and then solving the optimization problem in~\eqref{eqn:optimization prob.} yields: $p_v=0.0534$, $p_{v\eta}=3.7221$, $p_{vi}=-0.0034$, $k=1440.8$, and $\overline{\gamma}=1$. For \nrc{both} multi-GFLC-multi-machine systems, we consider $25.38\leq\sina{\sigma_g}\leq 32.68$ pu. In this case, for given $\mu=0.5$, the design parameters are obtained as follows: $p_v=0.0034$, $p_{v\eta}=0.0747$, $p_{vi}=0.00019$, $k=16002$, and $\overline{\gamma}=0.4163$. 

\noindent \nrc{\textit{Classical state-feedback controller:} The design of the state-feedback controller is performed first by linearizing the nonlinear model provided by~\eqref{eqn:nerr_dyn}, and then feeding back \sina{${v}_n$} to the controller. The gain of the controller are chosen $k_{sf}=-22$ for the single-GFLC system and $k_{sf}=-350$ for the  multi-GFLC-multi-machine system so that the state-feedback controller generates the control signal modulation $\widetilde{p}$. The block diagram of the state-feedback supplementary controller is shown in Fig.~\ref{fig:State-feedback controller}. The term $\frac{3 c_{g0}v_{n0}}{2}$ is due to linearization, where $c_{g0}$ is the nominal value of the grid capacitance and $v_{n0}$ is the pre-disturbance nominal voltage.}


\begin{figure}[h!]
    \centerline{\includegraphics[width=0.51 \textwidth]{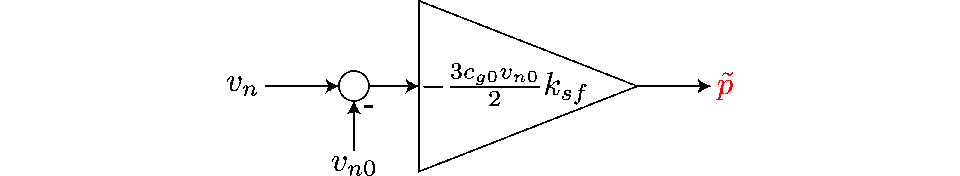}}
    \vspace{-5pt}
    \caption{\sina{State-feedback supplementary controller.}}
    \label{fig:State-feedback controller}
\end{figure}

\subsection{Case Studies - SPC Simulations}
\subsubsection{Single-GFLC Test System}
We examine how much increase in power output from the GFLC can be achieved in this weak grid under uncertainty.


\noindent \textit{A $0.5\%$ step increase in real power under uncertainty in $c_g$:}  Figure~\ref{fig:SM_uncertainty_step} illustrates the system response following a $0.5$\% step increase in GFLC active power reference while ramping up $c_g$ from $0.0963$ pu to $0.2454$ pu ($27.5\%$ change) during $t = 1$ s - $11$ s emulating uncertainty in the grid. The results  highlight that the adaptive control significantly enhances the damping of SSOs under this condition. Moreover, the maximum value of the control input $\widetilde{p}$ is only about $0.15\%$ of the nominal power, which is negligible. The slow convergence of the estimated parameter $\sina{\widehat{\sigma}_g}$ is also shown in Fig.~\ref{fig:SM_uncertainty_step}, which is governed by the adaptation law~\eqref{eqn:adaptive law}.
\begin{figure}[h!]
    \centerline{
    \includegraphics[width=0.42\textwidth]{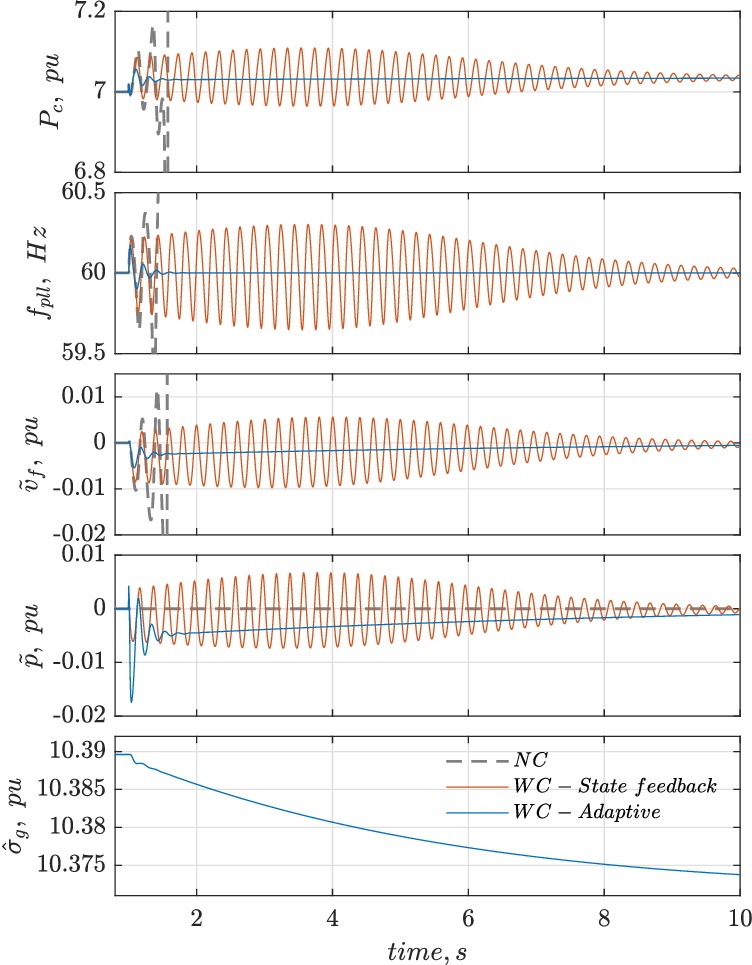}}
    \vspace{-5pt}
    \caption{Single-GFLC system: GFLC responses following a $0.5$\% step increase in active power reference. 
    Uncertainty in power grid is emulated by ramping up $c_g$ from $0.0963$ pu to $0.2454$ pu ($27.5\%$ change) during $t = 1$ s - $11$ s.}
    \label{fig:SM_uncertainty_step}
    \vspace{-8pt}
\end{figure}

\subsubsection{Multi-GFLC-\nrc{Modified $2$-Area} Test System}
Three cases are explored to validate the performance of the proposed controller that focus on two types of studies: (1) examine how much increase in power output from GFLC$2$ can be accommodated in the nominal system and also when a line is out under maintenance, thereby reducing the system strength further; and (2) consider large-signal disturbances like short-circuit faults.

\noindent \textit{a) Step increase in real power from GFLC$2$:} Dynamic responses of GFLC$2$ to a $1$\% step increase followed by an additional $4$\% step increase in GFLC$2$ active power reference, respectively at $t = 1$ s and $t = 5$~s, are illustrated in Fig.~\ref{fig:MM_step}. The figure clearly demonstrates that the NC scenario is unstable after the initial $1$\% step increase. 
It also shows that the proposed control allows a $5$\% increase in real power output from GFLC$2$, whereas state-feedback control allows only $1$\% increase.

\begin{figure}[h!]
    \centerline{
    \includegraphics[width=0.405\textwidth]{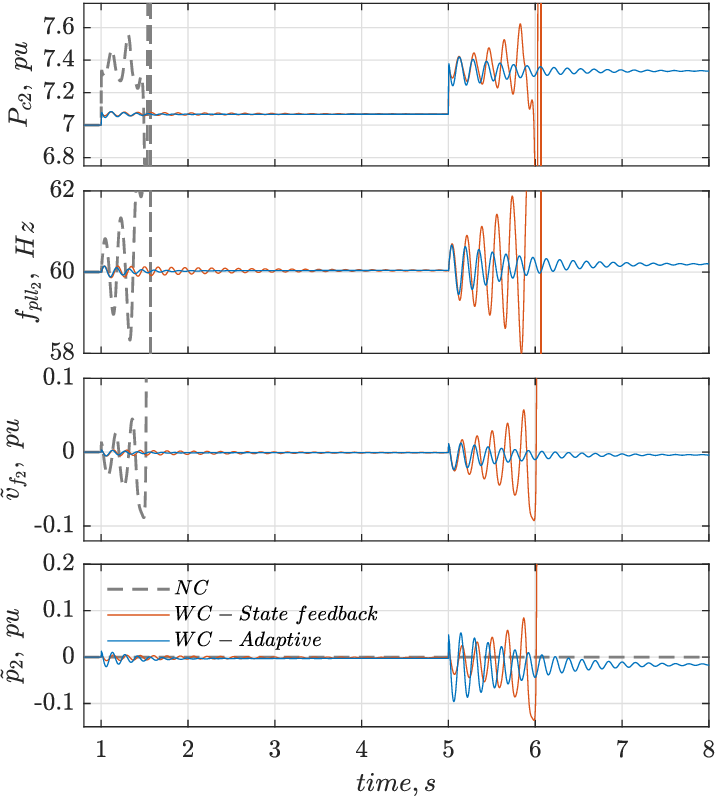}}
    \vspace{-5pt}
    \caption{Multi-GFLC \nrc{modified $2$-area} system: GFLC$2$ responses after a $1$\% step increase followed by an additional $4$\% step increase in active power reference. 
    }
    \label{fig:MM_step}
    \vspace{-12pt}
\end{figure}

\noindent \textit{b) Step increase in real power from GFLC$2$ when system under line maintenance:} One of the often-overlooked challenges come from the system operating under line maintenance condition leading to a weaker grid. To emulate that we consider absence of one of the parallel lines $9-10$ in Fig.~\ref{fig:11_bus_system} under \textit{nominal} conditions. Two consecutive step changes in $p_{ref}$ of GFLC$2$ under this condition shows that the adaptive control can increase real power by $2$\%, whereas state-feedback control can achieve a $1\%$ increase, see Fig.~\ref{fig:MM_linemain_plus_step}.

\begin{figure}[h!]
    \centerline{
    \includegraphics[width=0.405\textwidth]{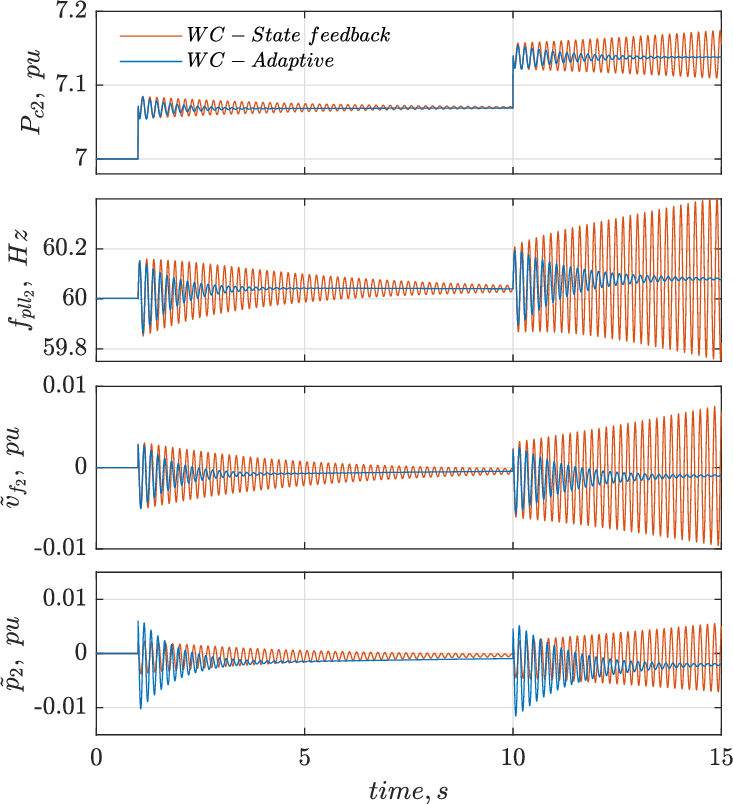}}
    \vspace{-5pt}
    \caption{Multi-GFLC \nrc{modified $2$-area} system under line maintenance: GFLC$2$ response following two consecutive $1$\% step increases in active power reference at $t = 1$ s and $t = 10$ s. One of the parallel lines between $9$-$10$ in Fig.~\ref{fig:11_bus_system} is under maintenance.}
    \label{fig:MM_linemain_plus_step}
    \vspace{-12pt}
\end{figure}

\noindent \textit{c) Short circuit fault followed by line outage:} A $3$-phase short circuit fault is applied near bus $10$ using a $15$ pu fault admittance for one cycle followed by the outage of one of the parallel lines $9$-$10$. Figure~\ref{fig:MM_fault_plus_lineout} shows the dynamic behavior of GFLC$2$ along with the center of inertia frequency ($f_{COI}$) of the system. It can be seen that the system is stable and well-damped in presence of adaptive supplementary control, whereas it becomes unstable in two other scenarios. \vspace{-3pt}

\begin{figure}[h!]
    \centerline{
    \includegraphics[width=0.42\textwidth]{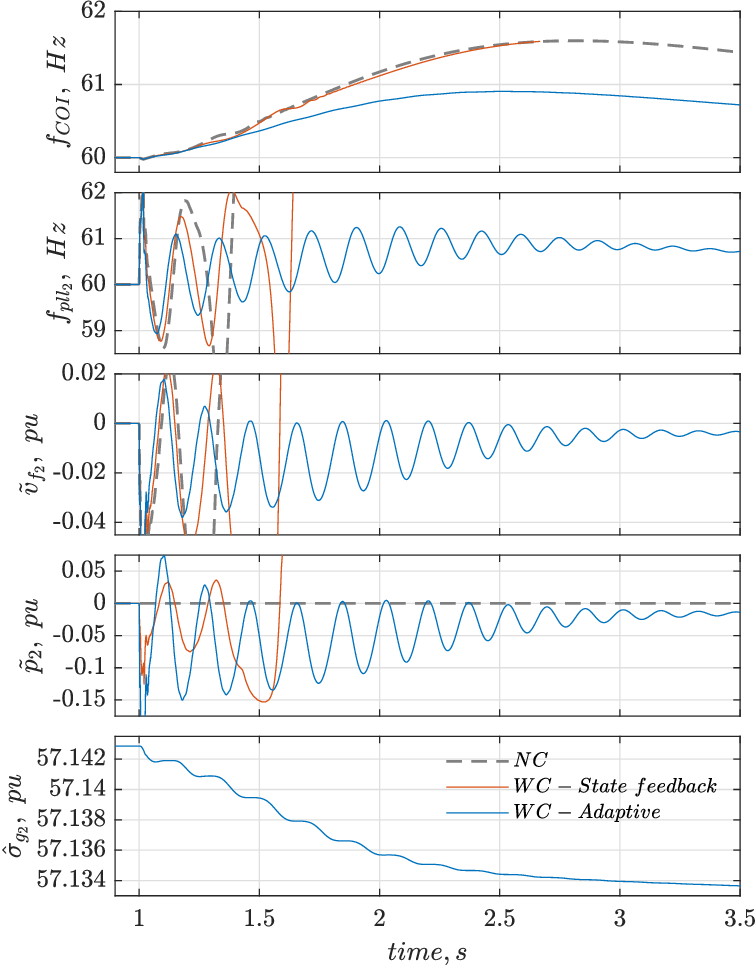}}
    \vspace{-5pt}
    \caption{Multi-GFLC \nrc{modified $2$-area}  system: System response after a three-phase one-cycle short circuit fault near bus $10$ using a $15$ pu fault admittance followed by the outage of one of the parallel lines $9$-$10$ in Fig.~\ref{fig:11_bus_system}.}
\label{fig:MM_fault_plus_lineout}
\vspace{-15pt}
\end{figure}

\begin{figure}[h!]
    \centerline{
    \includegraphics[width=0.42\textwidth]{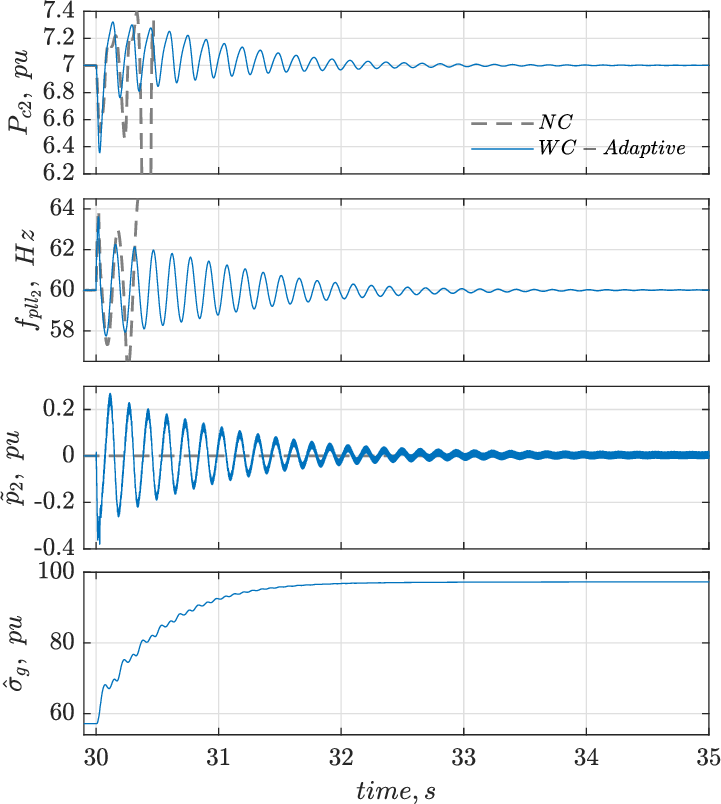}}
    \vspace{-5pt}
    \caption{EMT model of Multi-GFLC \nrc{modified $2$-area} system: Response following a three-phase one-cycle short circuit fault near bus $10$ using a $33$ pu fault admittance.}
    \label{fig:MM_fault_EMT}
    \vspace{-10pt}
\end{figure}

\begin{figure}[h!]
    \centerline{
    \includegraphics[width=0.42\textwidth]{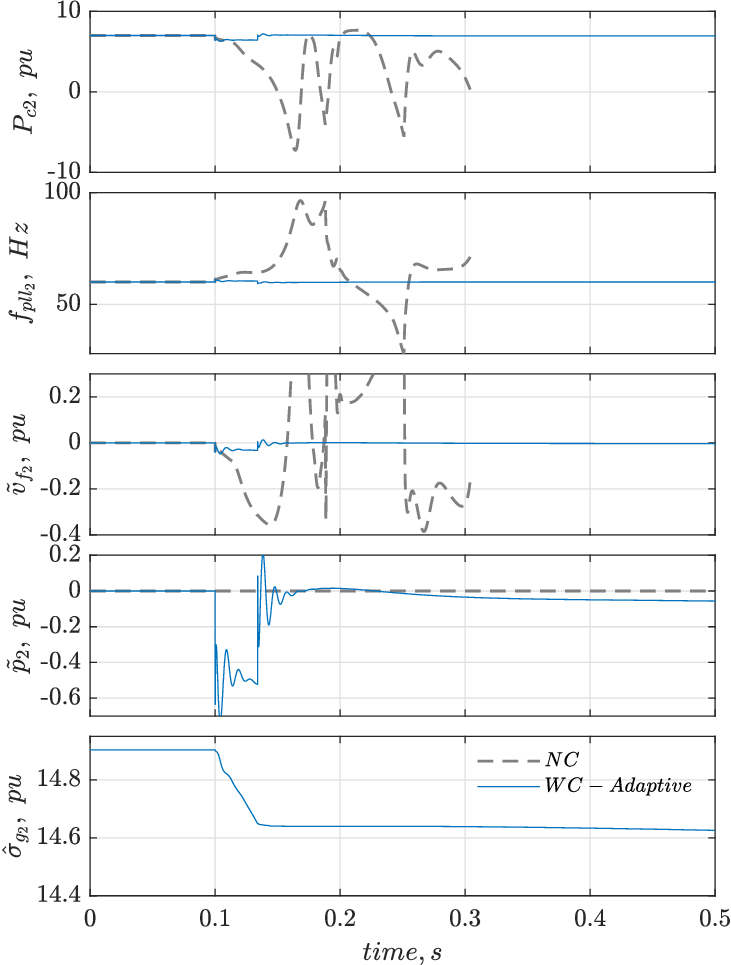}}
    \vspace{-5pt}
    \caption{\nrc{Multi-GFLC modified $68$-bus system: Zoomed view of GFLC2 response following a self-clearing three-phase two-cycle  fault near bus $50$ (Fig. \ref{fig:68_bus_system}) using a $200$ pu fault admittance.}}
    \label{fig:SCF_bus50Zoom}
    \vspace{-10pt}
\end{figure}

\begin{figure}[h!]
    \centerline{
    \includegraphics[width=0.42\textwidth]{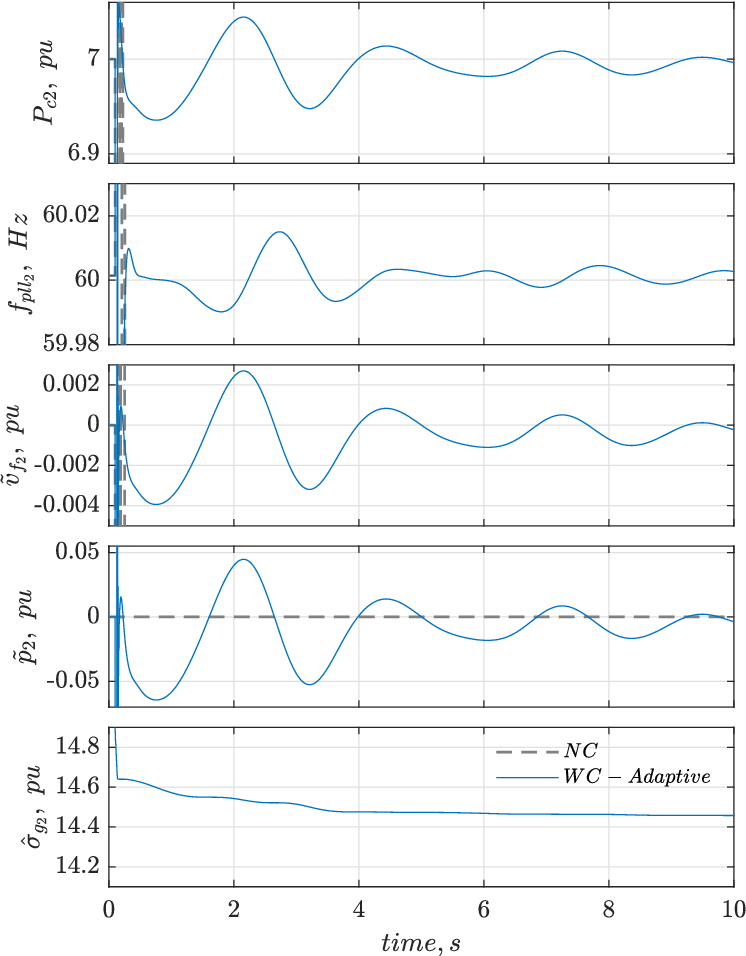}}
    \vspace{-5pt}
    \caption{\nrc{Multi-GFLC modified $68$-bus system: Response following a self-clearing three-phase two-cycle  fault near bus $50$ (Fig. \ref{fig:68_bus_system}) using a $200$ pu fault admittance.}}
    \label{fig:SCF_bus50}
    \vspace{-10pt}
\end{figure}

\begin{figure}[h!]
    \centerline{
    \includegraphics[width=0.42\textwidth]{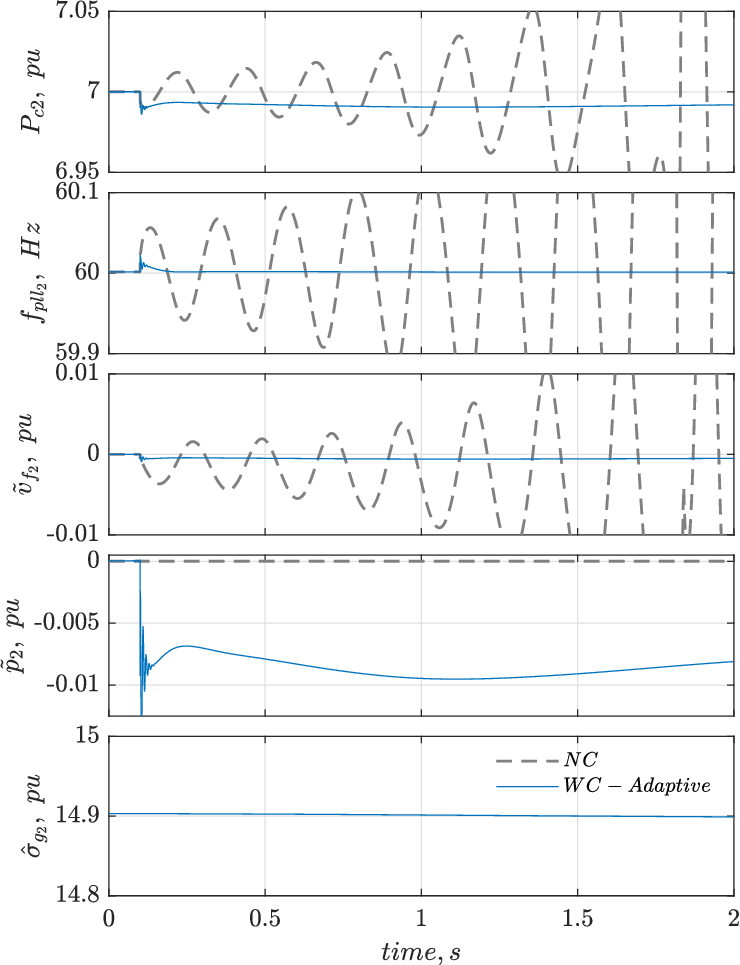}}
    \vspace{-5pt}
    \caption{\nrc{Multi-GFLC modified $68$-bus system: Response following the outage of one of the parallel circuits in $27$-$53$ tie-line of Fig.~\ref{fig:68_bus_system}.}}
    \label{fig:Lineout_27_53_zoomed}
    \vspace{-10pt}
\end{figure}

\subsection{Case Studies - EMT Simulations}
\textcolor{black}{\textit{Self-clearing short circuit fault:} A $3$-phase self-clearing fault lasting one cycle ($17$ ms) with a fault admittance of $33$ pu is applied at bus $10$ of the multi-GFLC \nrc{modified $2$-area} test system implemented in EMT domain. The simulations employ a $5$ ms moving average filtering on $v_{n}$ for the supplementary control action. 
The EMT responses following the fault are illustrated in Fig.~\ref{fig:MM_fault_EMT}. The results indicate that NC scenario is unstable, whereas the system response with the adaptive control is stable and well-damped. Although the controller could not stabilize the system following more severe faults, this outcome, nonetheless, demonstrates the efficacy of the proposed supplementary control in stabilizing against large-signal disturbances. Notably, unlike the single-GFLC system, the estimated parameter $\sina{\widehat{\sigma}_{g}}$ in multi-GFLC system is initialized with an estimate, assuming it is unknown. This leads to the parameter settling at a different value even after the system arrives back to the same pre-disturbance equilibrium. Moreover, a contrasting trend can be observed for $\sina{\widehat{\sigma}_{g}}$ in Figs~\ref{fig:MM_fault_plus_lineout} and \ref{fig:MM_fault_EMT} during the three-phase fault condition, which is believed to be caused by different representations of transmission network adopted in SPC (lumped $\pi$-section) and EMT (Bergeron model) simulations.}

\subsection{\nrc{Case Studies - QPC Simulations}}\label{sec:68busCases}
\nrc{Two types of studies are performed in the modified $68$-bus system shown in Fig.~\ref{fig:68_bus_system}: (1) large-signal disturbance using a self-clearing short circuit fault and (2) a tie-line outage with the objective of making the system weaker than the nominal condition.}

\noindent \nrc{\textit{a) Self-clearing short circuit fault:} First, we consider a self-clearing three-phase $2$-cycle short circuit fault near bus $50$ of Fig.~\ref{fig:68_bus_system} using a $200$ pu fault admittance at $t$ = $0.1$ s. Figure~\ref{fig:SCF_bus50Zoom} shows the dynamic behavior of GFLC2 for $0.4$ s following the fault. It can be seen that the fault is severe enough to make GFLC2 unstable in absence of supplementary damping control. On the other hand, the proposed supplementary adaptive controller stabilizes GFLC2. It should be noted that although the maximum variation in $\widetilde{p}_2$ is about $10\%$ of the nominal power immediately after the large disturbance, it becomes much smaller within a very short period of time. Figure~\ref{fig:SCF_bus50} shows the extended term simulation result emphasizing that the SSO mode have been damped and only the electromechanical modes are observed in the response.}

\noindent \nrc{\textit{b) Outage of one of the parallel circuits in tie-line $27$-$53$:} Next, we consider outage of one of the parallel circuits in tie-line $27$-$53$ in Fig.~\ref{fig:68_bus_system}. It can be seen from Fig.~\ref{fig:Lineout_27_53_zoomed} that GFLC2 becomes unstable in absence of the supplementary control. The negatively damped unstable SSO mode is visible in the response of GFLC2. In contrast, the adaptive supplementary controller is able to stabilize this unstable oscillation very quickly with very small amount of modulating signal $\widetilde{p}_2$.  } \\

\begin{figure}[h!]
    \centerline{
    \includegraphics[width=0.42\textwidth]{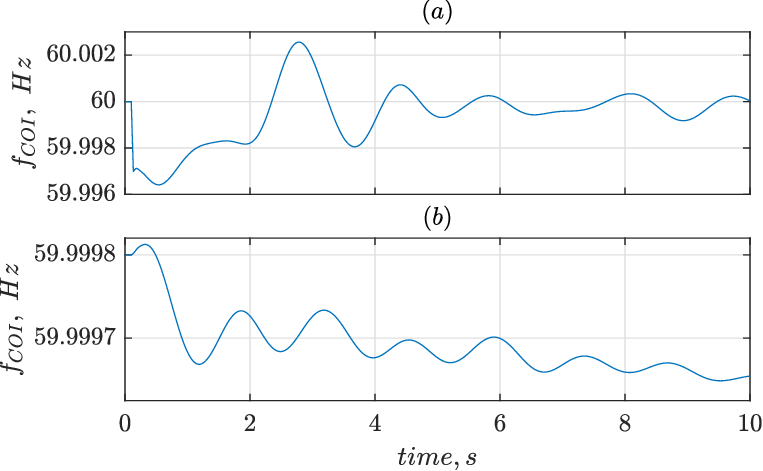}}
    \vspace{-5pt}
    \caption{\lk{Multi-GFLC modified $68$-bus system: Response following (a) a self-clearing three-phase two-cycle  fault near bus $50$ using a $200$ pu fault admittance and (b) the outage of one of the parallel circuits in $27$-$53$ tie-line.}}
    \label{fig:combined_plot_68bus}
    \vspace{-5pt}
\end{figure}

\noindent \nrc{\textit{Observation regarding existing electromechanical modes:} The extended term simulation in Fig.~\ref{fig:SCF_bus50} shows the signature of low frequency electromechanical modes in the response of the system. As expected, damping of these modes are not negatively impacted. To emphasize the point, we have plotted the center of inertia (COI) frequency of the SGs in presence of the adaptive controller for both of the case studies that can be seen from Fig.~\ref{fig:combined_plot_68bus}.}

\section{Conclusions and Future Work}
In this paper, a robust adaptive supplementary control with stability certificate was proposed for damping SSOs involving GFLCs, especially under weak grid scenarios. The control law was derived for one GFLC connected to a grid represented by an ideal voltage source behind an impedance with uncertainties and disturbances in the system. The analysis provided sufficient conditions for stability and showed that the closed-loop system is $dissipative$. It provided the minimum of the upper bound on the $\mathcal{L}_2$-gain from the lumped disturbances to the output system errors while preventing the gain of the supplementary control from becoming very high via solving an optimization problem. The effectiveness of the proposed controller was validated using several case studies performed on the SPC model of a single-GFLC-infinite-bus system, both SPC and EMT models of the IEEE $2$-area test system with two GFLCs replacing two SGs, \nrc{and a QPC model of the modified IEEE $5$-area test system with two GFLCs}. Simulation results show that the proposed robust adaptive supplementary controller can perform damping action effectively in stabilizing SSOs while demonstrating a significant degree of robustness towards the disturbances and uncertainties in the grid.

Future work will focus on extending this analysis to decentralized control for multiple GFLCs and incorporating current limits in the inner loops during control design. Additionally, we plan to estimate disturbances to feed them back into the control law, allowing for reduced supplementary control gain at the expense of increasing controller complexity. 

\bibliographystyle{IEEEtran}
\bibliography{R1}
\section*{Appendix}
\textbf{Proof of Theorem 1:}
    Consider the following candidate positive definite energy function
\begin{align}\label{eqn:Lyapunov func.}
        V=\frac{1}{2}\begin{bmatrix}
            \widetilde{v}_n&
            \eta&
            \widetilde{v}_{fi}&
            \widetilde{\sina{\sigma}}_g
        \end{bmatrix}
       P_F\begin{bmatrix}
            \widetilde{v}_n&
            \eta &
            \widetilde{v}_{fi}&
            \widetilde{\sina{\sigma}}_g
        \end{bmatrix}^\top.
    \end{align}
    Taking the first-time derivative of~\eqref{eqn:Lyapunov func.} yields
    \begin{align*}
        \dot{V}=&p_v\widetilde{v}_n\dot{\widetilde{v}}_n+p_{\eta}\eta\dot{\eta}+p_i\widetilde{v}_{fi}\widetilde{v}_f-\widetilde{\sina{\sigma}}_g\dot{\widehat{\sina{\sigma}}}_g\\&+p_{v\eta}\widetilde{v}_n\dot{\eta}+p_{v\eta}\eta\dot{\widetilde{v}}_n+p_{vi}\widetilde{v}_n\widetilde{v}_f+p_{vi}\widetilde{v}_{fi}\dot{\widetilde{v}}_n\\
        &+p_{\eta i}\eta\widetilde{v}_f+p_{\eta i}\widetilde{v}_{fi}\dot{\eta}.
    \end{align*}
Inserting the filtered tracking in~\eqref{eqn:er+inter}, and its dynamics in~\eqref{eqn:filt.track.dyn} and the closed-loop error dynamics in~\eqref{eqn:closed.loop.er.dyn_final}
    \begin{align*}
        \dot{V}=&p_v\widetilde{v}_n\left(-k\frac{\widetilde{\sina{\sigma}}_g}{\widehat{\sina{\sigma}}_g}\widetilde{v}_n-k\widetilde{v}_n-\sina{\sigma_g}k_{p,v}\eta-\widetilde{w}_{\sina{\sigma}}\right)\\ &+p_{\eta}\eta\left(\left(\psi-\sina{\beta}\right)\left(\eta-\psi\widetilde{v}_{fi}\right)+\sina{\beta}\widetilde{v}_n\right)+p_i\widetilde{v}_{fi}\left(\eta-\psi\widetilde{v}_{fi}\right)\\
        &-\widetilde{\sina{\sigma}}_g\dot{\widehat{\sina{\sigma}}}_g+p_{v\eta}\widetilde{v}_n\left(\left(\psi-\sina{\beta}\right)\left(\eta-\psi\widetilde{v}_{fi}\right)+\sina{\beta}\widetilde{v}_n\right)\\
        &+p_{v\eta}\eta\left(-k\frac{\widetilde{\sina{\sigma}}_g}{\widehat{\sina{\sigma}}_g}\widetilde{v}_n-k\widetilde{v}_n-\sina{\sigma_g}k_{p,v}\eta-\widetilde{w}_{\sina{\sigma}}\right)\\
&+p_{vi}\widetilde{v}_n\left(\eta-\psi\widetilde{v}_{fi}\right)\\
        &+p_{vi}\widetilde{v}_{fi}\left(-k\frac{\widetilde{\sina{\sigma}}_g}{\widehat{\sina{\sigma}}_g}\widetilde{v}_n-k\widetilde{v}_n-\sina{\sigma_g}k_{p,v}\eta-\widetilde{w}_{\sina{\sigma}}\right)\\
        &+p_{\eta i}\eta\left(\eta-\psi\widetilde{v}_{fi}\right)+p_{\eta i}\widetilde{v}_{fi}\left(\left(\psi-\sina{\beta}\right)\left(\eta-\psi\widetilde{v}_{fi}\right)+\sina{\beta}\widetilde{v}_n\right).
   \end{align*}
Define $\widetilde{w}_v\triangleq p_v\widetilde{w}_{\sina{\sigma}}$, $\widetilde{w}_{\eta}\triangleq p_{v\eta}\widetilde{w}_{\sina{\sigma}}$, $\widetilde{w}_i\triangleq p_{vi}\widetilde{w}_{\sina{\sigma}}$, and inserting the adaptation law given in~\eqref{eqn:adaptive law}, and then
        adding and subtracting $\gamma_v^2\widetilde{w}_v^2-\widetilde{v}_n^2$, $\gamma_{\eta}^2\widetilde{w}_{\eta}^2-\eta^2$, $\gamma_i^2\widetilde{w}_i^2-\widetilde{v}_{fi}^2$ yields
        \begin{align*}
\dot{V}=& -p_vk\widetilde{v}_n^2-p_vk_{p,v}\sina{\sigma_g}\widetilde{v}_n\eta-\gamma_v^2\left(\widetilde{w}_v+\frac{\widetilde{v}_n}{2\gamma_v^2}\right)^2+\frac{1}{4\gamma_v^2}\widetilde{v}_n^2+\widetilde{v}_n^2\\
&+p_{\eta}\left(\psi-\sina{\beta}\right)\eta^2-p_{\eta}\psi\left(\psi-\sina{\beta}\right)\eta\widetilde{v}_{fi}+p_{\eta}\sina{\beta}\eta\widetilde{v}_n\\\nonumber
        &+p_i\widetilde{v}_{fi}\eta-p_i\psi\widetilde{v}_{fi}^2+p_{v\eta}\left(\psi-\sina{\beta}\right)\widetilde{v}_n\eta\\\nonumber
        &-p_{v\eta}\psi\left(\psi-\sina{\beta}\right)\widetilde{v}_n\widetilde{v}_{fi}+p_{v\eta}\sina{\beta}\widetilde{v}_n^2-p_{v\eta}k\eta\widetilde{v}_n\\\nonumber
        &-p_{v\eta}k_{p,v}\sina{\sigma_g}\eta^2-\gamma_{\eta}^2\left(\widetilde{w}_{\eta}+\frac{\eta}{2\gamma_{\eta}^2}\right)^2+\frac{1}{4\gamma_{\eta}^2}\eta^2+\eta^2\\
        &+p_{vi}\widetilde{v}_n\widetilde{\eta}-p_{vi}\psi\widetilde{v}_n\widetilde{v}_{fi}-p_{vi}k\widetilde{v}_{fi}\widetilde{v}_n-p_{vi}k_{p,v}\sina{\sigma_g}\widetilde{v}_{fi}\eta\\
        &-\gamma_i^2\left(\widetilde{w}_i+\frac{\widetilde{v}_{fi}}{2\gamma_i^2}\right)^2+\frac{1}{4\gamma_i^2}\widetilde{v}_{fi}^2+\widetilde{v}_{fi}^2+p_{\eta i}\eta^2-p_{\eta i}\psi\eta\widetilde{v}_{fi}\\
        &-p_{\eta i}\psi\left(\psi-\sina{\beta}\right)\widetilde{v}_{fi}^2+p_{\eta i}\sina{\beta}\widetilde{v}_{fi}\widetilde{v}_n\\
        &+\gamma_v^2\widetilde{w}_v^2-\widetilde{v}_n^2+\gamma_{\eta}^2\widetilde{w}_{\eta}^2-\eta^2+\gamma_i^2\widetilde{w}_i^2-\widetilde{v}_{fi}^2\\
        \leq &-p_vk\widetilde{v}_n^2-p_vk_{p,v}\sina{\sigma_g}\widetilde{v}_n\eta+\frac{1}{4\gamma_v^2}\widetilde{v}_n^2+\widetilde{v}_n^2\\
        &+p_{\eta}\left(\psi-\sina{\beta}\right)\eta^2-p_{\eta}\psi\left(\psi-\sina{\beta}\right)\eta\widetilde{v}_{fi}+p_{\eta}\sina{\beta}\eta\widetilde{v}_n\\
        &+p_i\widetilde{v}_{fi}\eta-p_i\psi\widetilde{v}_{fi}^2+p_{v\eta}\left(\psi-\sina{\beta}\right)\widetilde{v}_n\eta\\\nonumber
        &-p_{v\eta}\psi\left(\psi-\sina{\beta}\right)\widetilde{v}_n\widetilde{v}_{fi}+p_{v\eta}\sina{\beta}\widetilde{v}_n^2-p_{v\eta}k\eta\widetilde{v}_n\\
        &-p_{v\eta}k_{p,v}\sina{\sigma_g}\eta^2+\frac{1}{4\gamma_{\eta}^2}\eta^2+\eta^2\\
        &+p_{vi}\widetilde{v}_n\widetilde{\eta}-p_{vi}\psi\widetilde{v}_n\widetilde{v}_{fi}-p_{vi}k\widetilde{v}_{fi}\widetilde{v}_n-p_{vi}k_{p,v}\sina{\sigma_g}\widetilde{v}_{fi}\eta\\
        &+\frac{1}{4\gamma_i^2}\widetilde{v}_{fi}^2+\widetilde{v}_{fi}^2+p_{\eta i}\eta^2-p_{\eta i}\psi\eta\widetilde{v}_{fi}\\
        &-p_{\eta i}\psi\left(\psi-\sina{\beta}\right)\widetilde{v}_{fi}^2+p_{\eta i}\sina{\beta}\widetilde{v}_{fi}\widetilde{v}_n\\&+\gamma_v^2\widetilde{w}_v^2-\widetilde{v}_n^2+\gamma_{\eta}^2\widetilde{w}_{\eta}^2-\eta^2+\gamma_i^2\widetilde{w}_i^2-\widetilde{v}_{fi}^2.
        \end{align*}
        Define $\mathbf{e}_F\triangleq \begin{bmatrix}
            \widetilde{v}_n&\eta&\widetilde{v}_{fi}&\widetilde{\sina{\sigma}}_g
        \end{bmatrix}^\top$, $\mathbf{e}_R\triangleq\left[\begin{array}{c|c}
             I & \mathbf{0} 
\end{array}\right]\mathbf{e}_F$, with $I\in\mathbb{R}^{3\times3}$, is a unitary matrix, $\widetilde{\mathbf{w}}\triangleq\begin{bmatrix}
\widetilde{w}_v&\widetilde{w}_{\eta}&\widetilde{w}_i
\end{bmatrix}^\top$, $\Gamma\triangleq\begin{bmatrix}
\gamma_v^2&0&0\\
0&\gamma_{\eta}^2&0\\
0&0&\gamma_i^2
\end{bmatrix}$, and using~\eqref{eqn:matrix Q} yields
\begin{align}\label{eqn:b4dissIneq}
    \dot{V}\leq-\mathbf{e}_F^\top Q_F \mathbf{e}_F+\widetilde{\mathbf{w}}^\top \Gamma \widetilde{\mathbf{w}}-\mathbf{e}_R^\top\mathbf{e}_R,
\end{align}
then adding and subtracting $\alpha V$ yields 
\begin{align}\label{eqn:bfdissineq}
    \dot{V}\leq& -\mathbf{e}_F^\top\left(Q_F-\alpha P_F\right)\mathbf{e}_F-\alpha V+\widetilde{\mathbf{w}}^\top\Gamma\widetilde{\mathbf{w}}-\mathbf{e}_R^\top\mathbf{e}_R\nonumber\\
    \leq&-\mathbf{e}_R^\top\left(Q-\alpha P\right)\mathbf{e}_R+\overline{\gamma}^2\widetilde{\mathbf{w}}^\top\widetilde{\mathbf{w}}-\mathbf{e}_R^\top\mathbf{e}_R.
\end{align}
Applying the sufficient conditions in the LMI from the optimization problem ~\eqref{eqn:optimization prob.} leads to
\begin{align}\label{eqn:dissIneq}
    \dot{V}\leq\overline{\gamma}^2\left\|\widetilde{\mathbf{w}}\right\|^2-\left\|\mathbf{e}_R\right\|^2,
\end{align}
from which the result follows.
$\blacksquare$
\end{document}